\title[Computing Young's Natural Representations]{Computing Young's Natural Representations for Generalized Symmetric Groups}
\author{Koushik Paul}
\author{Götz Pfeiffer}
\address{School of Mathematical and Statistical Sciences, University of Galway,
  Ireland}
\email{k.paul2@universityofgalway.ie, goetz.pfeiffer@universityofgalway.ie}
\keywords{Symmetric Group; Hyperoctahedral Group; Generalized Symmetric Group; Matrix Representations}
\subjclass[2020]{Primary 20C30; 20C15}
\let\emptyset\varnothing
\newtheorem{theorem}{Theorem}[subsection]
\newtheorem{proposition}[theorem]{Proposition}
\newtheorem{lemma}[theorem]{Lemma}
\newtheorem{corollary}[theorem]{Corollary}
\theoremstyle{definition}
\newtheorem{definition}[theorem]{Definition}
\newtheorem{example}[theorem]{Example}
\theoremstyle{remark}
\newtheorem{remark}[theorem]{Remark}
\numberwithin{equation}{subsection}
\newcommand{\C}{\mathbb{C}}
\newcommand{\N}{\mathbb{N}}
\newcommand{\Z}{\mathbb{Z}}
\newcommand{\Symm}{\mathfrak{S}}
\newcommand{\Grin}{\mathfrak{S}^{(r)}}
\newcommand{\Hypo}{\mathfrak{H}}
\newcommand{\nnn}{[n]}
\newcommand{\diag}{\mathop{\mathrm{diag}}\nolimits}
\newcommand{\Ind}{\mathop{\mathrm{Ind}}\nolimits}
\newcommand{\Irr}{\mathop{\mathrm{Irr}}\nolimits}
\newcommand{\GL}{\mathop{\mathrm{GL}}\nolimits}
\newcommand{\Size}[1]{\lvert #1 \rvert}
\newcommand{\Span}[1]{\langle #1 \rangle}
\newcommand{\pair}[2]{\genfrac[]{0pt}{1}{#1}{#2}}
\newcommand{\pairx}[2]{\genfrac[]{0pt}{}{#1}{#2}}
\newcommand{\sgn}[1]{\varepsilon_{#1}}
\newcommand{\SYT}[1]{\mathrm{SYT}_{#1}}
\begin{document}

\begin{abstract}
  We provide an algorithmic framework for the computation of explicit
  representing matrices for all irreducible representations of a
  generalized symmetric group $\Grin_n$, i.e., a wreath product of
  cyclic group of order $r$ with the symmetric group $\Symm_n$.  The
  basic building block for this framework is the Specht matrix, a
  matrix with entries $0$ and $\pm1$, defined in terms of pairs of
  certain words.  Combinatorial objects like Young diagrams and Young
  tableaus arise naturally from this setup.  In the case $r = 1$, we
  recover Young's natural representations of the symmetric group.  For
  general $r$, a suitable notion of pairs of $r$-words is used to
  extend the construction to generalized symmetric groups.
  Separately, for $r = 2$, where $\Grin_n$ is the Weyl group of type
  $B_n$, a different construction is based on a notion of pairs of
  biwords.
\end{abstract}

\dedicatory{To the memory of Richard Parker (1953--2024)}

\maketitle

\section{Introduction}

By Wedderburn's Theorem, the group algebra $\mathbb{C}G$ of a finite group
$G$ over the complex numbers $\mathbb{C}$ is isomorphic to a direct sum of
full matrix rings over $\C$.  Hence $\mathbb{C}G$ has a $\mathbb{C}$-basis
consisting of matrix units.  Identifying such a basis in $\mathbb{C}G$ is
equivalent to fixing explicit representing matrices for all the irreducible
representations of $G$.  In the case where $G$ is a cyclic group, this process
of changing the basis of $\mathbb{C}G$ is called Fast Fourier Transform.

If $G$ is the symmetric group $\Symm_n$, it is well-known that certain
elements constructed from sums, or signed sums, over the elements of
so-called Young subgroups of $\Symm_n$ are very close to being suitable
matrix units for a basis of $\mathbb{C} \Symm_n$.  Derived from this observation are the various descriptions of representing matrices for $\Symm_n$, known
as Young's natural form,
Young's seminormal form,
and Young's orthogonal form, where perhaps the natural form, despite its name, is the lesser known of them all.

There are plenty of excellent and comprehensive introductions to the
representation theory of the symmetric group, like the books by
Rutherford~\cite{Rutherford48}, James and Kerber~\cite{JaKe81},
Fulton~\cite{Fulton97}, and Sagan~\cite{Sagan2001}, where also historical
accounts on the pioneering work of Frobenius, Young, and Specht can be found.
Young's semi-normal form of the matrices for the irreducible modules of
the symmetric group has been generalized to other classes of groups,
and their Hecke algebras, for instance by Hoefsmit~\cite{Hoefsmit74} to the Hecke algebra of the Weyl group of type $B_n$, and by Ariki and Koike~\cite{AriKoi94} to the generic Hecke algebra of the generalized symmetric group.

Here, we use the
notion of a Specht matrix, as introduced in~\cite{WWZ17}, based on the
action of $\Symm_n$ on words of length $n$, and then its
action on certain pairs of such words, to reconstruct Young's natural
form. Combinatorial objects like partitions, or standard Young tableaus,
arise naturally in the process, which results in a uniform formula for all
representing matrices in all irreducible representations of $\Symm_n$, see
Theorem~\ref{thm:symm-rep}.

Using suitably generalized notions of words, and pairs of words, we can generalize this
approach to the monomial groups $\Grin_n$, also known as generalized
symmetric groups, or as complex reflection groups $G(r, 1, n)$ in the
Shepard--Todd classification \cite{ShepardTodd54}. This results in a uniform
formula for all representing matrices in all irreducible representations of
$\Grin_n$, see Theorem~\ref{thm:mono-rep}.

Moreover, using another variation of the notion of words, and pairs of words,
we can generalize this approach to the hyperoctahedral groups $\Hypo_n$, also
known as Weyl groups of type $B_n$, and as monomial groups $\Grin_n$ for
$r = 2$. This results in another uniform formula for all representing matrices
in all irreducible representations of $\Hypo_n$, see
Theorem~\ref{thm:hypo-rep}.

This paper is based on the first author's Ph.D.\ thesis~\cite{KPaulThesis}.
The algorithms have been implemented in GAP~\cite{GAP4} and are available on
\texttt{github}~\cite{specht}.
Our treatment is elementary.  The resulting matrices have integral entries,
suitable for modular reduction.  Possible extensions of our methods to other
groups and a version of our formulas for Hecke algebras are the subject of
ongoing research.

\textbf{Notation:} $\N = \{1,2,3, \dotsc\}$ and $\nnn = \{1, \dots, n\}$, for
$n \in \N \cup \{0\}$.  A function $f \colon X \to Y$, in particular a
permutation $f: X \to X$, acts from the right on its argument $x \in X$: we
often write $x.f$ for the image of $x$ under $f$.  The \emph{composition}
$fg$ of functions $f$ and $g$ is defined as $x.{(fg)} = (x.f).g$.
We write $\sgn{\sigma}$ for the \emph{sign} of a permutation $\sigma$.
All modules and vector spaces are over the field $\C$ of complex numbers.


\section{Symmetric Groups}\label{sec:symm}

We describe a construction of explicit matrices for the irreducible modules
of the symmetric group $\Symm_n$.

\subsection{Partitions and Diagrams.}
\label{sec:A-part}

A \emph{integer partition} $\lambda$ (or a \emph{partition}, for short) is a
multiset of nonnegative integers $\lambda_i$, called the \emph{parts} of $\lambda$,
usually written in decreasing order, omitting zeros, as
\begin{align*}
  \lambda = (\lambda_1, \dots, \lambda_l) \quad \text{ with }
  \lambda_1 \geq \dots \geq \lambda_l > 0\text.
\end{align*}
We say that $\ell(\lambda):= l$ is the \emph{length} of $\lambda$, and for
convenience set $\lambda_j = 0$ for $j > l$.  If
$\Size{\lambda}:= \lambda_1 + \dots + \lambda_l = n$, we say that $\lambda$ is a partition
\emph{of} $n$ and write $\lambda \vdash n$.  We write $\emptyset$ for the empty partition, the unique partition of~$0$.  We denote $\Lambda_n := \{\lambda : \lambda \vdash n\}$ and assume that $\Lambda_n$ is ordered lexicographically.
\begin{example}
  $\Lambda_5 = \{( 1, 1, 1, 1, 1 ), ( 2, 1, 1, 1 ), ( 2, 2, 1 ), ( 3, 1, 1 ), ( 3, 2 ), ( 4, 1 ), ( 5 )\}$.
\end{example}

We call a finite subset of
$\N \times \N$ a \emph{diagram}.  One assigns to the partition $\lambda$ the
\emph{Young diagram}
\begin{align*}
  D(\lambda) := \{(i, j) \in \N^2 : j \leq \lambda_i\}\text.
\end{align*}
Note that $D(\lambda) \subseteq {\nnn}^2$ if $\lambda \vdash n$.  Clearly,
$\lambda$ can be recovered from its diagram.  The \emph{transpose} of a
diagram $D$ is the diagram $D^t = \{(j, i) : (i, j) \in D\}$.  The diagram
$D(\lambda)^t$ is the diagram of a partition $\lambda^t$, called the
\emph{transpose} of~$\lambda$.  A Young diagram is commonly represented by an
array of square boxes in the positions $(i, j) \in D$.
\begin{example}
  The partition $\lambda = (2,1,1)$ has diagram
  $\smash{\raisebox{6pt}{\tiny\ydiagram{2,1,1}}}$ with transpose
  $\smash{\raisebox{6pt}{\tiny\ydiagram{3,1}}}$ corresponding to
  the partition $\lambda^t = (3,1)$.
\end{example}

\subsection{Permutations and Cycle Type.}
\label{sec:A-perm}

A \emph{permutation} of a finite set $X$ is a bijective map
$\sigma \colon X \to X$.  The group of all permutations of $X$ is called the
\emph{symmetric group} of $X$, denoted by $\Symm_X$.  Here we are mostly
interested in the case $X = \nnn$, for a nonnegative integer $n$, and write
$\Symm_n$ for $\Symm_{\nnn}$.

The cycles of a
permutation $\sigma \in \Symm_n$ form a partition of the set $\nnn$, whence
their lengths form an integer partition of $n$, called the \emph{cycle type} of $\sigma$.
Two permutations in $\Symm_n$ are conjugate if and only if they have the same
cycle type.  Hence, the set $\Lambda_n$
parameterizes both the conjugacy classes and the irreducible representations of
$\Symm_n$.

In the following, we provide an explicit construction of
irreducible representations of $\Symm_n$, one for each $\lambda \in
\Lambda_n$. For this, the permutation action of $\Symm_n$ on the cosets of its
subgroup
\begin{align*}
  \Symm_{\lambda} := \Symm_{\lambda_1} \times  \dots \times \Symm_{\lambda_l}\text,
\end{align*}
for $\lambda = (\lambda_1, \dots, \lambda_l)$, plays a crucial role.

\subsection{Words and Partitions.}
\label{sec:A-words}

Let $n \in \N$, and let $A$ be a finite alphabet.  The symmetric group
$\Symm_n$ acts on the set $A^n$ of \emph{words} of length $n$ by permuting
the letters of a word:
\begin{align*}
  (a_1 \dotsm a_n).\sigma = a_{1.\sigma^{-1}} \dotsm a_{n.\sigma^{-1}}\text,
\end{align*}
for $a_1, \dots, a_n \in A$ and $\sigma \in \Symm_n$.
We may regard a word $w = w_1 \dotsm w_n$ as a map
\begin{align*}
  w \colon \nnn \to A\text,\quad i \mapsto i.w = w_i
\end{align*}
with \emph{inverse image map}
\begin{align*}
w^{*} \colon A \to 2^{\nnn}\text,\quad
  a \mapsto a.w^{*} = \{i \in [n] : w_i = a\}\text.
\end{align*}
Then $\Symm_n$ acts on
$A^n$ by \emph{left inverse composition} (i.e., $w.\sigma = \sigma^{-1} w$),
and it acts on the set $\{w^{*} : w \in A^n\}$ by
\emph{right composition}\footnote{Here we are abusing notation $\sigma$
for the permutation $\sigma \colon 2^{\nnn} \to 2^{\nnn}$ arising from the action of
$\Symm_n$ on the
power set $2^{\nnn}$, via
$  J.\sigma = \{j.\sigma: j \in J\}$
for $J \subseteq \nnn$ and $\sigma \in \Symm_n$.}
(i.e., $w^{*}. \sigma = w^{*} \sigma$),
so that $(w.\sigma)^{*}= w^{*}.\sigma$ for all $w \in A^n$, $\sigma \in \Symm_n$.

The \emph{stabilizer} in $\Symm_n$ of the word $w \in A^n$ is the subgroup
$\Symm(w^{*}):= \prod_{a \in A} \Symm(a.{w^{*}})$, the direct product of the
symmetric groups on the (non-empty) fibers of~$w$.  The
$\Symm_n$-\emph{orbit} of $w$ consists of all its \emph{rearrangements}.  The
multiset of the letter frequencies $\Size{a.w^{*}}$ of $w$ corresponds to a partition of
$n$, which we call the \emph{shape} of~$w$. If $w$ has shape $\lambda$, then
its stabilizer $\Symm(w^*)$ is conjugate to $\Symm_{\lambda}$ and thus the action of $\Symm_n$ on the
set of rearrangements of $w$ is equivalent to its action on the cosets of
$\Symm_{\lambda}$.

Now let $A = \nnn$ and fix $\lambda \vdash n$.
We define the \emph{canonical word} $w_{\lambda} \in A^n$ of shape $\lambda$ as
\begin{align*}
  w_{\lambda} := 1^{\lambda_1} 2^{\lambda_2} \dotsm n^{\lambda_n}\text,
\end{align*}
that is $\lambda_1$ letters $1$ followed by $\lambda_2$ letters $2$
and so on.  We denote
by $X_{\lambda} := w_{\lambda}.\Symm_n$ the set of rearrangements of $w_{\lambda}$.
Then, as $\Symm_n$-set, $X_{\lambda}$ is
isomorphic to the right cosets of
$\Symm_{\lambda}$ in~$\Symm_n$.  For $\sigma \in \Symm_n$, we denote by $[\sigma]_{X_{\lambda}}^{X_{\lambda}}$ the matrix of the action of $\sigma$ on the
permutation module $V^{\lambda}$ with basis $X_{\lambda}$.  Then
\begin{align}\label{eq:symm-perm-mat}
  [v.\sigma]_{X_{\lambda}} = [v]_{X_{\lambda}}\, [\sigma]_{X_{\lambda}}^{X_{\lambda}},
\end{align}
for all $v \in V^{\lambda}$, $\sigma \in \Symm_n$.

\subsection{Pairs of Words and Tableaus.}
\label{sec:A-pairs}

Let $A = [n]$.  We consider the action of $\Symm_n$ on the set
$A^{2 \times n} = A^n \times A^n$ of pairs of words over the alphabet~$A$.
We write $\pair{x}{y}$ for the pair of words $x = x_1 \dotsm x_n$,
$y = y_1 \dotsm y_n \in A^n$ and regard it as a map
\begin{align*}
  \pair{x}{y} \colon [n] \to A^2\text,\quad
  i \mapsto \pair{x_i}{y_i}\text,
\end{align*}
where $A^2 = \{\pair{a}{b} : a, b \in A\}$.
Its \emph{inverse image function} is
\begin{align*}
  \pair{x}{y}^{*}\colon A^2 \to 2^{\nnn}\text, \qquad
  \pair{a}{b} \mapsto
  \{i \in \nnn : \pair{x_i}{y_i} = \pair{a}{b}\}
  = a.{x^{*}} \cap b.{y^{*}}\text.
\end{align*}
Note that $a.x^* = \coprod_{b \in A} \pair{a}{b}.\pair{x}{y}^*$
and $b.y^* = \coprod_{a \in A} \pair{a}{b}.\pair{x}{y}^*$.
So both $x^{*}$ and $y^{*}$
can be recovered from $\pair{x}{y}^{*}$, and thus the map $\pair{x}{y} \mapsto \pair{x}{y}^*$
is a bijection.

The action of $\Symm_n$ on $A^n$ induces an $\Symm_n$-action on the
set of pairs $\{\pair{x}{y} : x, y \in A^n\}$ via
$\pair{x}{y}.\sigma = \pair{x.\sigma}{y.\sigma}$.  Moreover, $\Symm_n$
acts by \emph{right composition} on the inverse images,
\begin{align*}
\pair{x}{y}^{*}.\sigma = \pair{x}{y}^{*} \sigma = (\pair{a}{b}
\mapsto (a.{x^{*}} \cap b.{y^{*}}).\sigma)\text,
\end{align*}
where
$\pair{x}{y}^{*}.\sigma = (\pair{x}{y}.\sigma)^{*}$, showing that the
bijection $\pair{x}{y} \mapsto \pair{x}{y}^*$ is $\Symm_n$-equivariant.

The stabilizer in $\Symm_n$ of the pair $\pair{x}{y} \in A^{2 \times n}$ is
$\Symm(\pair{x}{y}^{*}) = \Symm(x^{*}) \cap \Symm(y^{*})$.
\begin{example}
  The stabilizer in $\Symm_4$ of the pair $\pair{1123}{1112}$ has size $2$,
  due to the repetition of column $\pair{1}{1}$, whereas $\Symm(\pair{1321}{1112}^*) = 1$.
\end{example}
The pairs $\pair{x}{y}$ with trivial stabilizer form
the \emph{free
  component}
\begin{align*}
(A^{2 \times n})^{\#} := \{
\pair{x}{y} \in A^{2 \times n} :
\Symm(\pair{x}{y}^{*}) = 1\}
\end{align*}
of $A^{2 \times n}$,
where
$\Symm(\pair{x}{y}^{*}) = 1$ if and only if $\Size{a.{x^{*}} \cap b.{y^{*}}} \leq 1$ for
all $\pair{a}{b} \in A^2$.
For any pair of subsets $X, Y \subseteq A^n$ that are closed under the action of $\Symm_n$, we denote by $(X \times Y)^{\#} = \{\pair{x}{y} \in X \times Y :
\Symm(\pair{x}{y}^{*}) = 1\}$
the \emph{free component} of $X \times Y$.

Clearly, $\Symm_n$ acts freely on $(X \times Y)^{\#}$.  If this set forms a single $\Symm_n$-orbit, we can and will identify its elements with the permutations in $\Symm_n$.
For this, we now classify those pairs of
$\Symm_n$-orbits $X, Y \subseteq A^n$ for which $\Symm_n$ acts
transitively on the free component
$(X \times Y)^{\#}$.  As any set $X$ of rearrangements of a word $x$ of shape  $\lambda \vdash n$ is isomorphic to
$X_{\lambda}$,  it
suffices to consider $\Symm_n$-sets of the form $X_{\lambda} \times X_{\eta}$, for $\lambda, \eta \vdash n$.
We
furthermore define the \emph{diagram} of a pair $\pair{x}{y}$ as
$\{(a, b) : \pair{a}{b}.\pair{x}{y}^* \neq \emptyset \} \subseteq \nnn^2$,
i.e., as the set of pairs of letters that occur as a column $\pair{a}{b}$ in
the pair of words $\pair{x}{y}$, regarded as a subset of~$[n]^2$.
\begin{example}
  The pair $\pair{1321}{1112}$ has diagram $\smash{\raisebox{6pt}{\tiny\ydiagram{2,1,1}}}$, whereas $\pair{1123}{1112}$ has diagram $\smash{\raisebox{6pt}{\tiny\ydiagram{1,1,1+1}}}$.
\end{example}
\begin{proposition}\label{pro:symm-free-trans}
  Let $\lambda$ and $\eta$ be partitions of $n$.  Then $\Symm_n$ acts
  transitively on the free component of $X_{\lambda} \times X_{\eta}$ if and
  only if $\eta = \lambda^t$.  In that case, the diagram of each pair
  $\pair{x}{y} \in (X_{\lambda} \times X_{\lambda^t})^{\#}$ is the Young diagram
  $D(\lambda)$ of shape~$\lambda$.
\end{proposition}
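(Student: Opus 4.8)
The plan is to recast the statement as a problem about $0/1$-matrices with prescribed row and column sums, settle that, and translate back.

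\textbf{From pairs to diagrams.} Write $\Gamma := (X_{\lambda}\times X_{\eta})^{\#}$ for the free component. For $\pair{x}{y}\in\Gamma$, freeness means the columns $\pair{x_i}{y_i}$ are pairwise distinct, so $i\mapsto\pair{x_i}{y_i}$ is a bijection from $[n]$ onto the diagram $D=D(\pair{x}{y})\subseteq[n]^2$, and counting how often a letter occurs in $x$ (resp.\ $y$) shows that $D$ has row sums $\lambda$, i.e.\ $\#\{b:(a,b)\in D\}=\lambda_a$ for all $a$, and column sums $\eta$. Call such a $D$ \emph{admissible}. Since $\pair{x}{y}^{*}.\sigma=(\pair{x}{y}.\sigma)^{*}$, the diagram is constant along $\Symm_n$-orbits; conversely, reading off the $n$ cells of any admissible $D$ in some order produces a pair in $\Gamma$ with that diagram; and two pairs with the same diagram $D$, corresponding to bijections $\phi,\phi'\colon[n]\to D$, are related by the permutation $\sigma$ with $\sigma^{-1}\colon i\mapsto\phi^{-1}(\phi'(i))$. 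Together with the freeness of the $\Symm_n$-action on $\Gamma$ noted in the text, this shows that the $\Symm_n$-orbits on $\Gamma$ correspond bijectively to the admissible diagrams, and that the action on each orbit is simply transitive. Hence $\Symm_n$ acts transitively on $\Gamma$ (in particular $\Gamma\neq\emptyset$) if and only if there is exactly one admissible diagram.

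\textbf{Non-transitivity when $\eta\neq\lambda^t$.} Let $D$ be any admissible diagram. If every row of $D$ were an initial segment $\{1,\dots,r_a\}$, then $r_a=\lambda_a$, so $D=D(\lambda)$ and its column sums would be $\lambda^t$, forcing $\eta=\lambda^t$. So if $\eta\neq\lambda^t$, some row $a$ has $(a,b)\notin D$ and $(a,b')\in D$ for some $b<b'$. Since the column sums $\eta_b\geq\eta_{b'}$ are weakly decreasing, a short count of the rows meeting columns $b$ and $b'$ yields a row $a'\neq a$ with $(a',b)\in D$ and $(a',b')\notin D$; replacing the $2\times2$ pattern $\left[\begin{smallmatrix}0&1\\1&0\end{smallmatrix}\right]$ in rows $\{a,a'\}$ and columns $\{b,b'\}$ by $\left[\begin{smallmatrix}1&0\\0&1\end{smallmatrix}\right]$ preserves all row and column sums and so produces a second admissible diagram. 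Thus, for $\eta\neq\lambda^t$, either there is no admissible diagram (so $\Gamma=\emptyset$ and $\Symm_n$ does not act transitively on it) or there are at least two (so at least two orbits); in both cases the action is not transitive.

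\textbf{Uniqueness when $\eta=\lambda^t$.} It remains to show that if $\eta=\lambda^t$ then $D(\lambda)$ is the unique admissible diagram. The diagram $D(\lambda)$ is certainly admissible, since it has row sums $\lambda$ and column sums $\lambda^t=\eta$. For uniqueness I would induct on $n$. Let $c=\lambda_1$ and $m=\#\{a:\lambda_a=c\}$. Columns $>c$ are empty because $\lambda^t_b=0$ there, and column $c$ contains exactly $\lambda^t_c=m$ cells; a row of row sum $c$ with no cell in column $c$ would require $c$ cells among the $c-1$ columns $1,\dots,c-1$, which is impossible, so the $m$ cells of column $c$ occupy precisely the $m$ rows of row sum $c$. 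Deleting column $c$ leaves an admissible diagram for the partition $\mu$ obtained from $\lambda$ by lowering its $m$ largest parts by $1$ — one checks that $\mu$ is again a partition and that $\mu^t$ is the remaining column-sum vector — and $|\mu|<n$, so by induction that smaller diagram, and hence $D$ itself, must equal $D(\lambda)$. This also gives the last assertion of the proposition: for $\eta=\lambda^t$ we have $\Gamma\neq\emptyset$, and every pair in $\Gamma$ has diagram $D(\lambda)$.

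\textbf{Main obstacle.} The dictionary between pairs and diagrams is bookkeeping, and the exchange argument is standard; the real content is the uniqueness lemma — that the ``most unbalanced'' admissible column-sum vector $\lambda^t$ leaves no freedom at all. The delicate point there is the inductive removal of the last column, where one must check that the reduced data $(\mu,\mu^t)$ is again of the required form so that the induction hypothesis applies.
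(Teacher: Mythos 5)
Your proof is correct, and it takes a genuinely different route from the paper. The paper's proof compares $\eta^t$ with $\lambda$ lexicographically: when $\eta^t < \lambda$ a pigeonhole argument shows the free component is empty, when $\eta^t > \lambda$ a column exchange produces two distinct orbits, and in the equality case the claim that every free pair has diagram $D(\lambda)$ is essentially asserted rather than argued. You instead make the orbit structure explicit: orbits of $\Symm_n$ on the free component biject with ``admissible'' diagrams $D \subseteq [n]^2$ having row sums $\lambda$ and column sums $\eta$, so transitivity becomes uniqueness of such a diagram; the $2\times 2$ exchange (the same move as in the paper's footnote, but performed on diagrams) shows non-uniqueness whenever an admissible diagram other than $D(\lambda)$ exists, and your column-peeling induction proves the key uniqueness statement that margins $(\lambda,\lambda^t)$ force $D = D(\lambda)$ --- a Gale--Ryser-type extremality fact that is exactly the step the paper leaves implicit in its last sentence. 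What each approach buys: the paper's trichotomy is shorter and additionally pins down when the free component is empty (namely $\eta^t < \lambda$), which you do not need and do not determine; your version is more self-contained, dispenses with the lexicographic comparison and the pigeonhole step, makes the orbit/diagram dictionary (and hence the ``same diagram $\Rightarrow$ same orbit'' direction) fully explicit, and yields the final assertion about the diagram of every free pair as an immediate corollary of the uniqueness lemma rather than as a separate claim. The small conventions you rely on (treating an empty free component as a failure of transitivity, and the fact that letter $a$ occurs $\lambda_a$ times in any $x \in X_{\lambda}$) agree with the paper's usage, and your counting step producing the row $a'$ with $(a',b)\in D$, $(a',b')\notin D$ is sound since $\eta_b \geq \eta_{b'}$ for $b<b'$.
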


\begin{proof}
Let $\lambda' = \eta^t$ and let $x \in X_{\lambda}$ and $y \in X_{\eta}$.
  Then $\lambda_1$ is number of letters $1$ in $x$, and $\lambda_1'$ is the
  number of distinct letters in~$y$.  Consider the lexicographical order on
  the set of all partitions of $n$.

  If $\lambda' < \lambda$ then, without loss of generality,
  $\lambda_1' < \lambda_1$: There are more letters $a = 1$ in $x$ than
  there are distinct letters in $y$, so by the Pigeonhole Principle,
  $\Size{a.{x^{*}} \cap b.{y^{*}}} > 1$ for one letter $b$ in~$y$. It
  follows that $(X_{\lambda} \times X_{\eta})^{\#} = \emptyset$ in
  this case.

  If $\lambda' > \lambda$ then, without loss of generality,
  $\lambda_1' > \lambda_1$: There are less letters $a = 1$ in $x$ than there
  are distinct letters in $y$.  It follows\footnote{There is a letter $b$ in
    $y$ so that $\pair{a}{b}$ does not occur in $\pair{x}{y}$.  But
    $\pair{c}{b}$ does occur in $\pair{x}{y}$ for at least one letter $c$ in
    $x$.  But since there are not more letters $c$ than letters $a$ in $x$
    there must be a letter $d$ in $y$ such that $\pair{c}{d}$ does not occur,
    but $\pair{a}{d}$ does.}  that there are letters $c$ in $x$ and $b, d$ in
  $y$ such that none of the pairs $\pair{a}{b}$ and $\pair{c}{d}$ does occur
  as a column in $\pair{x}{y}$ whereas both $\pair{a}{d}$ and $\pair{c}{b}$
  do.  Let $\pair{x'}{y'}$ be the pair obtained from $\pair{x}{y}$ by
  replacing the columns $\pair{a}{d}$ and $\pair{c}{b}$ by the columns
  $\pair{a}{b}$ and $\pair{c}{d}$:
  \begin{align*}
    \pair{x}{y} =
    \pair{\dotsm a \dotsm c \dotsm}{\dotsm d \dotsm b \dotsm}
    \leadsto
    \pair{x'}{y'} =
    \pair{\dotsm a \dotsm c \dotsm}{\dotsm b \dotsm d \dotsm}\text.
  \end{align*}
  Then the pairs $\pair{x}{y}$ and $\pair{x'}{y'}$ are
  not in the same $\Symm_n$-orbit.  But if $\pair{x}{y}$ lies in the
  free component then so does $\pair{x'}{y'}$.  Hence $\Symm_n$ does not
  act transitively on the free component in this case.

  However, if $\lambda' = \lambda$ then all pairs $\pair{x}{y}$ in
  the free component have the same diagram: a Young diagram of shape
  $\lambda$.
\end{proof}

Note that, if $\pair{x}{y} \in
(X_{\lambda} \times X_{\lambda^t})^{\#}$
then
$\Size{\pair{a}{b}. \pair{x}{y}^*}
= 1$ for all $(a, b) \in D(\lambda)$.
So we can replace each such set $\pair{a}{b}. \pair{x}{y}^*$ by its single element and regard $\pair{x}{y}^*$ as a \emph{(Young) tableau} $T$ of shape~$\lambda$:
\begin{align*}
  T \colon D(\lambda) \to \nnn\text,\, (a, b) \mapsto i\text{, if }
  \pair{a}{b}. \pair{x}{y}^* = \{i\}\text.
\end{align*}
Then, for $a \in \nnn$, the $a$th \emph{row} of
$T$ is $(T(a, 1), \dots, T(a, \lambda_a))$,
the positions $i \in \nnn$ such that $x_i = a$ and
$y_i = b$, for $b = 1, \dots, \lambda_a$, in that order.  Denote by $T^{\flat}$ the
concatenation of the rows of $T$.  Then $T^{\flat}$ is a permutation
of $\nnn$.  We will write $\pair{x}{y}^{\flat}$ for $T^{\flat}$ if $T$ is the tableau obtained from $\pair{x}{y}^*$, when $\pair{x}{y} \in (X_{\lambda} \times X_{\lambda^t})^{\#}$.  Clearly,
the map $\pair{x}{y} \mapsto \pair{x}{y}^{\flat}$ is
an equivariant bijection between
$(X_{\lambda} \times X_{\lambda^t})^{\#}$ and $\Symm_n$.

Following~\cite{WWZ17}, we now define the {Specht matrix} $M_{\lambda}$ for
$\lambda \vdash n$ as follows.
\begin{definition}\label{def:symm-specht}
  Let $\lambda \vdash n$.   The \emph{Specht matrix} for $\lambda$ is the matrix $M_{\lambda} = (m^{\lambda}_{yx})$,
with rows labelled by $y \in X_{\lambda^t}$ and columns labelled by $x \in X_{\lambda}$,
  where
  \begin{align*}
    m^{\lambda}_{yx} =
    \begin{cases}
      \sgn{\pairx{x}{y}^{\flat}},& \text{if }
                       \pair{x}{y} \in (X_{\lambda} \times X_{\lambda^t})^{\#}\text,\\
      0, & \text{else.}
    \end{cases}
  \end{align*}
\end{definition}

\begin{remark}\label{rem:symm-myx}
  Note that
    $m_{yx}^{\lambda} = \sgn{\sigma}\, m_{y.\sigma,x.\sigma}^{\lambda}$
  for all $\sigma \in \Symm_n$, $x \in X_{\lambda}$, and $y \in X_{\lambda^t}$.
\end{remark}

\begin{example} Let
  $\lambda = (2,1,1)$, $\lambda^t = (3,1)$.
  Note how, e.g., the pair $\pair{1321}{1112}$
  corresponds to the tableau $T = \pair{1321}{1112}^*
  = \smash{\raisebox{6pt}{\tiny\ytableaushort{14,3,2}}}$\,
  and permutation $T^{\flat} = [1,4,3,2] = (2,4)$ with $\sgn{T^{\flat}} = -1$.
  \begin{align*}
    \begin{array}{c|cccccccccccc}
      &\textcolor{cyan}{1123}&1132&\textcolor{cyan}{1213}&\textcolor{cyan}{1231}&1312&1321&2113&2131&2311&3112&3121&3211 \\\hline
  \textcolor{cyan}{1112}&   . &  . &  . & \textcolor{cyan}{+1} &  . & -1 &  . & -1 & +1 &  . & +1 & -1 \\
  \textcolor{cyan}{1121}&   . &  . & \textcolor{cyan}{-1} &  . & +1 &  . & +1 &  . & -1 & -1 &  . & +1 \\
  \textcolor{cyan}{1211}&  \textcolor{cyan}{+1} & -1 &  . &  . &  . &  . & -1 & +1 &  . & +1 & -1 &  . \\
  2111&  -1 & +1 & +1 & -1 & -1 & +1 &  . &  . &  . &  . &  . &  . \\
    \end{array}
  \end{align*}
\end{example}

\subsection{Irreducible Modules.}
\label{sec:symm-irr-mod}

Following Fulton~\cite[Section~7.2]{Fulton97}, we construct an irreducible
$\Symm_n$-module $S^{\lambda}$ for each partition $\lambda \vdash n$, and
then identify it with the row space of the Specht matrix~$M_{\lambda}$.

Let $\lambda$ be a partition of $n$, and
recall that $V^{\lambda}$ is the
permutation module with basis $X_{\lambda}$, the rearrangements of $w_{\lambda}$.
For each word $y \in X_{\lambda^t}$, set
\begin{align*}
  c_y := \sum_{\sigma \in \Symm(y^*)} \sgn{\sigma} \sigma \in \C \Symm_n\text.
\end{align*}
Note that
$c_y^2 = \Size{\Symm_{\lambda}}\, c_y$.
Then,
for each tableau $T = \pair{x}{y}^{*}$ of shape $\lambda$ (that is, for each pair $\pair{x}{y} \in (X_{\lambda} \times X_{\lambda^t})^{\#}$),
we define a vector $v_T \in V^{\lambda}$ as $v_T := \sgn{T^{\flat}}\, x.c_y$.  Then
\begin{align*}
  v_T
  = \sum_{\sigma \in \Symm(y^{*})} \sgn{(T.\sigma)^{\flat}}\, x.\sigma
  = \sum_{x'} \sgn{\pair{\smash{x'}}{y}^{\flat}}\, x'
  = \sum_{x' \in X_{\lambda}} m^{\lambda}_{yx'}\, x'\text,
\end{align*}
where the second sum is over all rearrangements $x'$ of $x$ such that $\pair{x'}{y} \in (X_{\lambda} \times X_{\lambda^t})^{\#}$.
It follows that $v_T$ only depends on $y \in X_{\lambda^t}$,
and that the coefficient vector $[v_T]_{X_{\lambda}} = (m^{\lambda}_{yx})_{x \in X_{\lambda}}$ is the $y$-row of the Specht matrix $M_{\lambda}$.
We set $v_y := v_T$ for any tableau $T = \pair{x}{y}^*$.

\begin{proposition}\label{pro:symm-S-lambda}
  Let $S^{\lambda} := \Span{v_y : y \in X_{\lambda^t}}_{\C}$.  Then
  $S^{\lambda}$ is a $\C \Symm_n$-module.  In fact,   $S^{\lambda} = v_y \C \Symm_n$, for any  $y \in X_{\lambda^t}$.
\end{proposition}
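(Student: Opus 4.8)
The plan is to reduce both assertions to the single identity
\begin{align*}
  v_y.\sigma = \sgn{\sigma}\, v_{y.\sigma}, \qquad y \in X_{\lambda^t},\ \sigma \in \Symm_n\text,
\end{align*}
after which everything is formal. To obtain it, I would start from the coefficient formula $v_y = \sum_{x' \in X_{\lambda}} m^{\lambda}_{yx'}\, x'$ established above and use that $X_{\lambda}$ is closed under the $\Symm_n$-action to reindex the sum for $v_y.\sigma$ by $x'' := x'.\sigma$, which gives $v_y.\sigma = \sum_{x'' \in X_{\lambda}} m^{\lambda}_{y,\, x''.\sigma^{-1}}\, x''$. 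Applying Remark~\ref{rem:symm-myx} to the pair $\pair{x''.\sigma^{-1}}{y}$ with the permutation $\sigma$ rewrites the coefficient as $m^{\lambda}_{y,\, x''.\sigma^{-1}} = \sgn{\sigma}\, m^{\lambda}_{y.\sigma,\, x''}$, and comparison with the coefficient formula for $v_{y.\sigma}$ completes the computation. (One could equally argue from $v_T = \sgn{T^{\flat}}\, x.c_y$, using $c_y\sigma = \sigma\, c_{y.\sigma}$ — which holds because $\Symm((y.\sigma)^{*}) = \sigma^{-1}\Symm(y^{*})\sigma$ — together with the equivariance of $\pair{x}{y} \mapsto \pair{x}{y}^{\flat}$; this route has the same content but more index bookkeeping.)

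Granting the identity, I would finish as follows. Fix $y \in X_{\lambda^t}$. Since $X_{\lambda^t} = w_{\lambda^t}.\Symm_n$ is a single $\Symm_n$-orbit, every $y' \in X_{\lambda^t}$ has the form $y' = y.\sigma$ for some $\sigma \in \Symm_n$, so $v_{y'} = \sgn{\sigma}\, v_y.\sigma \in v_y\,\C\Symm_n$; hence $S^{\lambda} = \Span{v_{y'} : y' \in X_{\lambda^t}}_{\C} \subseteq v_y\,\C\Symm_n$. Conversely, $v_y.\sigma = \sgn{\sigma}\, v_{y.\sigma} \in S^{\lambda}$ for every $\sigma \in \Symm_n$, so by linearity $v_y\,\C\Symm_n \subseteq S^{\lambda}$. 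Therefore $S^{\lambda} = v_y\,\C\Symm_n$ for each $y \in X_{\lambda^t}$; in particular $S^{\lambda}$ is stable under $\Symm_n$ and is a cyclic $\C\Symm_n$-submodule of $V^{\lambda}$.

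The only step requiring any care is the reindexing and sign-tracking in the derivation of $v_y.\sigma = \sgn{\sigma}\, v_{y.\sigma}$; there is no genuine obstacle once Remark~\ref{rem:symm-myx} is available. It may be worth noting in passing that $S^{\lambda} \neq 0$, since $M_{\lambda}$ has a nonzero row — the pair $\pair{x}{y}$ with $\pair{x}{y}^{\flat} = \mathrm{id}$ contributes the entry $+1$ — although this is not needed for the present statement.
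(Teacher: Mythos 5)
Your proof is correct and follows essentially the same route as the paper: both rest on the identity $v_y.\sigma = \sgn{\sigma}\, v_{y.\sigma}$, obtained from the coefficient formula and Remark~\ref{rem:symm-myx} by reindexing the sum over $X_{\lambda}$. You merely make explicit the reindexing and the transitivity-of-$\Symm_n$-on-$X_{\lambda^t}$ argument that the paper leaves implicit when concluding $S^{\lambda} = v_y\,\C\Symm_n$.
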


\begin{proof} For each $\sigma \in \Symm_n$ and each $y \in X_{\lambda^t}$, we have
  \begin{align*}
    v_y.\sigma
    = \sum_{x\in X_{\lambda}} m_{yx}^{\lambda}\, x.\sigma
    = \sgn{\sigma} \sum_{x\in X_{\lambda}} m_{y.\sigma,x.\sigma}^{\lambda}\, x.\sigma
    = \sgn{\sigma} v_{y.\sigma}\text,
  \end{align*}
by Remark~\ref{rem:symm-myx}.
\end{proof}

\begin{corollary}\label{cor:symm-ny-action}
  Let $\lambda, \lambda' \vdash n$ be such that $\lambda < \lambda'$. Then, for all $y \in X_{\lambda^t}$,
  \begin{enumerate}
  \item[(i)] $V^{\lambda}.c_y = S^{\lambda}.c_y = \C v_y \neq 0$, and
  \item [(ii)] $V^{\lambda'}.c_y = S^{\lambda'}.c_y = 0$.
  \end{enumerate}
\end{corollary}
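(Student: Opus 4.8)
The plan is to understand how the single algebra element $c_y$ acts on basis vectors of a permutation module. For any partition $\mu \vdash n$ and any $x \in X_{\mu}$, expand $x.c_y = \sum_{\sigma \in \Symm(y^*)} \sgn{\sigma}\, (x.\sigma)$ and group the terms according to the value of $x.\sigma$. For $\sigma, \tau \in \Symm(y^*)$ one has $x.\sigma = x.\tau$ exactly when $\sigma\tau^{-1} \in \Symm(x^*) \cap \Symm(y^*) = \Symm(\pairx{x}{y}^{*})$, so the contribution of each coset of this subgroup is $0$ when the subgroup is nontrivial (it then contains a transposition, forcing cancellation in pairs) and is $\pm(x.\sigma)$ when it is trivial. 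Hence $x.c_y = 0$ unless $\pair{x}{y}$ lies in the free component of $X_{\mu} \times X_{\lambda^t}$, and in that case the $x.\sigma$ are pairwise distinct basis vectors, so $x.c_y$ is a nonzero combination of basis vectors with coefficients in $\{0,\pm1\}$. Since $V^{\mu}.c_y = \Span{x.c_y : x \in X_{\mu}}_{\C}$, both parts of the corollary reduce to controlling this free component for $\mu = \lambda$ and for $\mu = \lambda'$.

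For (ii), take $\mu = \lambda'$. The hypothesis $\lambda < \lambda'$, combined with $(\lambda^t)^t = \lambda$, places us in the first case of the proof of Proposition~\ref{pro:symm-free-trans} (applied to the pair of partitions $\lambda'$ and $\lambda^t$), which shows $(X_{\lambda'} \times X_{\lambda^t})^{\#} = \emptyset$. Therefore $x.c_y = 0$ for every $x \in X_{\lambda'}$, so $V^{\lambda'}.c_y = 0$, and $S^{\lambda'}.c_y = 0$ follows at once from $S^{\lambda'} \subseteq V^{\lambda'}$.

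For (i), take $\mu = \lambda$. By Proposition~\ref{pro:symm-free-trans}, $(X_{\lambda} \times X_{\lambda^t})^{\#}$ is a single nonempty $\Symm_n$-orbit, and since $\Symm_n$ acts transitively on $X_{\lambda^t}$, our fixed $y$ occurs as the second component of some pair $\pair{x}{y} \in (X_{\lambda} \times X_{\lambda^t})^{\#}$. For such an $x$, the first paragraph gives $x.c_y \neq 0$, while the definition of $v_T$ together with the fact (recorded before Proposition~\ref{pro:symm-S-lambda}) that $v_T$ depends only on $y$ gives $x.c_y = \sgn{\pairx{x}{y}^{\flat}}\, v_y$; in particular $v_y \neq 0$. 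Together with $x.c_y = 0$ for all remaining $x \in X_{\lambda}$ this yields $V^{\lambda}.c_y = \C v_y \neq 0$. Finally $S^{\lambda} \subseteq V^{\lambda}$ gives $S^{\lambda}.c_y \subseteq \C v_y$, and since $v_y \in S^{\lambda}$ and $v_y.c_y = \sgn{\pairx{x}{y}^{\flat}}\, x.c_y^{2}$ is a nonzero scalar multiple of $v_y$ (as $c_y^{2}$ is a positive integer multiple of $c_y$), we also get $\C v_y \subseteq S^{\lambda}.c_y$, hence equality.

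I expect the main obstacle to be the first paragraph — the coset-cancellation computation pinning down exactly when $x.c_y$ vanishes — and, relatedly, matching the hypothesis $\lambda < \lambda'$ to the correct case of Proposition~\ref{pro:symm-free-trans} through the transpose; everything after that is routine bookkeeping with the already-established properties of the vectors $v_y$ and of $c_y$.
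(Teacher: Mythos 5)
Your proposal is correct and follows essentially the same route as the paper: the sign-cancellation argument showing $x.c_y = 0$ for non-free pairs (the paper does this with a single transposition $\tau$ fixing $x$ and $\tau c_y = -c_y$, which is the same cancellation your coset decomposition makes explicit), the emptiness of the free part of $X_{\lambda'} \times X_{\lambda^t}$ from Proposition~\ref{pro:symm-free-trans} for (ii), and for (i) the identification $x.c_y = \pm v_y$ together with $c_y^2$ being a positive multiple of $c_y$ to get the sandwich $\C v_y \subseteq S^{\lambda}.c_y \subseteq V^{\lambda}.c_y = \C v_y$. You simply spell out details the paper leaves implicit.
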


\begin{proof}
  (i) Let $y \in X_{\lambda^t}$.  Then $S^{\lambda} \subseteq V^{\lambda}$ implies $\Span{v_y}
  \subseteq S^{\lambda}.c_y \subseteq V^{\lambda}.c_y = \Span{v_y}$.

  (ii)   Let $y \in X_{\lambda^t}$.  If the pair $\pair{x}{y} \in X_{\lambda'} \times X_{\lambda^t}$ does not lie in a free orbit then it contains a repeated column $\pair{a}{b} \in A^2$, and hence  the stabilizer $\Symm(y^{*})$ contains a transposition $\tau$ that fixes $x$.
  From $\sgn{\tau} = -1$ follows $\tau c_y = -c_y$,
  whence $x.c_y = (x.\tau).c_y = x.(\tau c_y) = -x.c_y$, and so $x.c_y = 0$.
  In the proof of Proposition~\ref{pro:symm-free-trans}, it has been observed that the free part of $X_{\lambda'} \times X_{\lambda^t}$ is empty if $\lambda < \lambda'$.  Hence $x.c_y = 0$ for all $x \in X_{\lambda'}$.
\end{proof}

The following criterion for indecomposability is straightforward.

\begin{lemma}\label{la:indec}
  If, for any finite group $G$ and a $G$-module $V$, there is an element
  $c \in \C G$ such that $V.c = \Span{u}$ for some $u \in V$, and if
  $u.\C G = V$ then $V$ is indecomposable.
\end{lemma}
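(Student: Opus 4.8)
The plan is to prove the contrapositive: assume $V = V_1 \oplus V_2$ is a nontrivial direct sum of $G$-submodules, and derive a contradiction with the two hypotheses $V.c = \Span{u}$ and $u.\C G = V$. The key structural fact I would use is that for a submodule decomposition, $V.c = V_1.c \oplus V_2.c$, because $c$ acts on each summand and the summands are independent; indeed, writing $v = v_1 + v_2$, we get $v.c = v_1.c + v_2.c$ with $v_i.c \in V_i$, so $V.c = V_1.c + V_2.c$ and the sum is direct since $V_1 \cap V_2 = 0$.

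From $V.c = \Span{u}$ being one-dimensional, the direct sum decomposition $V.c = V_1.c \oplus V_2.c$ forces one of the two, say $V_2.c$, to be zero, while $V_1.c = \Span{u}$ (or both could be zero, but then $u = 0$ and $u.\C G = 0 \neq V$ since the decomposition is nontrivial, so in fact $V_1.c = \Span{u}$ and $u \neq 0$). In particular $u \in V_1$. Now I invoke the second hypothesis: $V = u.\C G \subseteq V_1$, since $V_1$ is a $\C G$-submodule containing $u$. This gives $V = V_1$, hence $V_2 = 0$, contradicting the nontriviality of the decomposition. Therefore $V$ is indecomposable.

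The only genuine content here is the observation that applying an algebra element respects a direct sum decomposition of modules, together with the dimension-counting step that a one-dimensional space cannot split as a direct sum of two nonzero subspaces. The main (minor) obstacle is just bookkeeping the edge case where $u = 0$: one must note that the statement implicitly has $u \neq 0$ forced by $u.\C G = V$ whenever $V \neq 0$, and if $V = 0$ then indecomposability is vacuous or conventionally excluded; I would dispatch this in a single sentence rather than belabor it. Everything else is immediate from the definitions of submodule and direct sum.
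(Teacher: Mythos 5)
Your argument is correct: the decomposition $V.c = V_1.c \oplus V_2.c$, the one-dimensionality of $\Span{u}$ forcing $u$ into one summand, and then $V = u.\C G \subseteq V_1$ is exactly the intended ``straightforward'' argument --- the paper itself states Lemma~\ref{la:indec} without proof. Your handling of the $u=0$ edge case is also fine, so nothing is missing.
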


\begin{theorem}\label{thm:symm-complete}
  The modules $S^{\lambda}$ for $\lambda \vdash n$ form a complete set of
  pairwise non-isomorphic irreducible $\C \Symm_n$-modules.
\end{theorem}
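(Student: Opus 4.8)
The plan is to establish three things: that each $S^{\lambda}$ is irreducible, that they are pairwise non-isomorphic, and that there are enough of them to exhaust $\Irr(\Symm_n)$. The third point is essentially free: by the discussion in Section~\ref{sec:A-perm}, the number of partitions of $n$ equals the number of conjugacy classes of $\Symm_n$, hence the number of irreducible modules. So once irreducibility and pairwise non-isomorphism are in hand, a counting argument closes the proof.

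For irreducibility, I would apply Lemma~\ref{la:indec} with $G = \Symm_n$, $V = S^{\lambda}$, $c = c_y$ for a fixed $y \in X_{\lambda^t}$, and $u = v_y$. By Corollary~\ref{cor:symm-ny-action}(i), $S^{\lambda}.c_y = \C v_y = \Span{v_y}$, and by Proposition~\ref{pro:symm-S-lambda}, $v_y.\C\Symm_n = S^{\lambda}$; hence $S^{\lambda}$ is indecomposable. Over $\C$, a group algebra is semisimple, so an indecomposable module is in fact irreducible — that is the one external fact I would invoke (Maschke's theorem / Wedderburn, already alluded to in the introduction).

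For pairwise non-isomorphism, suppose $\lambda \neq \lambda'$; without loss of generality $\lambda < \lambda'$ in the lexicographic order. I would show $S^{\lambda} \not\cong S^{\lambda'}$ by exhibiting an element of $\C\Symm_n$ acting nontrivially on one and as zero on the other. Take $y \in X_{\lambda^t}$ and consider $c_y$. By Corollary~\ref{cor:symm-ny-action}(i), $S^{\lambda}.c_y = \C v_y \neq 0$. By Corollary~\ref{cor:symm-ny-action}(ii), $S^{\lambda'}.c_y = 0$ (this is where the hypothesis $\lambda < \lambda'$ is used, since $\lambda^t > \lambda'^{\,t}$ translates appropriately — or one applies the corollary directly in the stated form). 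Since $c_y$ acts as zero on $S^{\lambda'}$ but not on $S^{\lambda}$, and isomorphic modules must agree on the action of every algebra element, $S^{\lambda} \not\cong S^{\lambda'}$. This gives at least $\Size{\Lambda_n}$ pairwise non-isomorphic irreducibles.

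Combining: the $S^{\lambda}$, $\lambda \vdash n$, are $\Size{\Lambda_n}$ pairwise non-isomorphic irreducible modules, and $\Size{\Lambda_n}$ is exactly the number of isomorphism classes of irreducible $\C\Symm_n$-modules, so they form a complete set. The main obstacle is a bookkeeping subtlety rather than a conceptual one: Corollary~\ref{cor:symm-ny-action} is phrased with $y \in X_{\lambda^t}$ where $\lambda$ is the \emph{smaller} partition, so I must be careful that the roles of $\lambda$ and $\lambda'$ (and their transposes) are lined up correctly when invoking part (ii) — checking that "$\lambda < \lambda'$" in the corollary matches the situation I want, possibly after passing to transposes. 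Everything else is a direct citation of the preceding results.
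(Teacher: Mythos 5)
Your proof is correct and takes essentially the same route as the paper: irreducibility via Lemma~\ref{la:indec} combined with Corollary~\ref{cor:symm-ny-action}(i) and Proposition~\ref{pro:symm-S-lambda} (with the semisimplicity upgrade from indecomposable to irreducible, which the paper leaves tacit), pairwise non-isomorphism via Corollary~\ref{cor:symm-ny-action}(ii), and completeness from the count of partitions versus conjugacy classes noted in Section~\ref{sec:A-perm}. Your bookkeeping worry is unfounded: with $\lambda < \lambda'$ and $y \in X_{\lambda^t}$ the corollary applies directly in its stated form, exactly as you suspected.
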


\begin{proof}
  As $v_y. \C \Symm_n = S^{\lambda}$ by Proposition~\ref{pro:symm-S-lambda}
  and
  $S^{\lambda}.c_y = \Span{v_y}$ by Corollary~\ref{cor:symm-ny-action}(i), the
  $\Symm_n$-module $S^{\lambda}$ is irreducible by Lemma~\ref{la:indec}.
  Moreover, by
  Corollary~\ref{cor:symm-ny-action}(ii), $S^{\lambda'}$ is not isomorphic to
  $S^{\lambda}$ if $\lambda < \lambda'$.
\end{proof}

\subsection{Representing Matrices and Standard Tableaus.}
Let $\lambda \vdash n$.
An element $\sigma \in \Symm_n$ acts on the $\Symm_n$-module $V^{\lambda}$
with basis $X_{\lambda}$ as a matrix $[\sigma]_{X_{\lambda}}^{X_{\lambda}}$
according to \eqref{eq:symm-perm-mat}.
We now identify a basis $B_{\lambda}$ of the submodule $S^{\lambda}$
and derive a formula for the representing matrices $[\sigma]_{B_{\lambda}}^{B_{\lambda}}$ in terms of the Specht matrix $M^{\lambda}$ and $[\sigma]_{X_{\lambda}}^{X_{\lambda}}$ as
Theorem~\ref{thm:symm-rep} below.

Our basis $B_{\lambda}$ will be defined in terms of standard Young
tableaus.
A tableau $T$ is called a \emph{standard (Young) tableau}, if
its entries are increasing along its rows (left to right)
and its columns (top to bottom).
\begin{definition}\label{def:std-pair}
  A pair of words $\pair{x}{y} \in A^{2 \times n}$ is a \emph{standard pair}
  if (i)  the map $x \colon [n] \to A$ is strictly increasing on the preimages
  $b.y^{*}$, for all $b \in A$, and (ii)
  the map $y \colon \nnn \to A$ is strictly increasing on the preimages $a.{x^{*}}$,
for all $a \in A$.
\end{definition}

Note that,  if $\pair{x}{y} \in (X_{\lambda} \times X_{\lambda^t})^{\#}$ is a standard pair then $T = \pair{x}{y}^{*}$ is a standard tableau.
We denote by $\SYT{\lambda}$ the set of all standard tableaus $T$ of shape
$\lambda$.

We assume that both $A^2$ and the set $A^{2 \times n}$
of words over $A^2$ are ordered lexicographically.  This order induces an order on the tableaus of shape $\lambda$, and in particular on $\SYT{\lambda}$.

\begin{example}
  For $\lambda = (3,2)$, we have $\SYT{\lambda}$ ordered as follows:
  \begin{align*}
    \pair{11122}{12312} <
    \pair{11212}{12132} <
    \pair{11221}{12123} <
    \pair{12112}{11232} <
    \pair{12121}{11223}\text{, i.e., }
    \smash{\raisebox{4pt}{\tiny\ytableaushort{123,45}}} <
    \smash{\raisebox{4pt}{\tiny\ytableaushort{124,35}}} <
    \smash{\raisebox{4pt}{\tiny\ytableaushort{125,34}}} <
    \smash{\raisebox{4pt}{\tiny\ytableaushort{134,25}}} <
    \smash{\raisebox{4pt}{\tiny\ytableaushort{135,24}}}\text.
  \end{align*}
\end{example}

Note that, with respect to this order, the standard pairs
$\pair{x}{y} \in (X_{\lambda} \times X_{\lambda^t})^{\#}$ are characterized
by the property
\begin{align}\label{eq:smallest-tab}
  \pair{x}{y} < \pair{x}{y}.\sigma\text{, for all }
  \sigma \in \Symm(x^*)\text{ and for all }\sigma \in \Symm(y^*)\text.
\end{align}
It follows that the projection maps from $\SYT{\lambda}$ onto $X_{\lambda}$ and $X_{\lambda^t}$, given by
$\pair{x}{y}^* \mapsto x$ and
$\pair{x}{y}^* \mapsto y$, are both injective. We denote the images of those projections by $X_{\lambda}^{\heartsuit}$ and $X_{\lambda^t}^{\heartsuit}$, respectively.  We now set
\begin{align*}
  B_{\lambda} := \{v_y : y \in X_{\lambda^t}^{\heartsuit}\}
  = \{v_{T} : T \in \SYT{\lambda}\}.
\end{align*}

\begin{lemma}\label{la:syt-indep}
  The set $B_{\lambda} \subseteq S^{\lambda}$ is linearly independent.
\end{lemma}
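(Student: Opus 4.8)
The plan is to show that the matrix whose rows are the coefficient vectors $[v_y]_{X_\lambda}$, for $y \in X_{\lambda^t}^{\heartsuit}$, has full row rank $\Size{\SYT{\lambda}}$. By the discussion preceding Definition~\ref{def:symm-specht} and in Section~\ref{sec:symm-irr-mod}, the row $[v_y]_{X_\lambda}$ is exactly the $y$-row $(m^\lambda_{yx})_{x \in X_\lambda}$ of the Specht matrix $M_\lambda$. So $B_\lambda$ is linearly independent if and only if the submatrix of $M_\lambda$ with rows indexed by $X_{\lambda^t}^{\heartsuit}$ has rank $\Size{\SYT{\lambda}}$. I would prove this by exhibiting a square submatrix of that size which is triangular with nonzero diagonal, after a suitable ordering of rows and columns.

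The key idea is a ``leading term'' argument using the lexicographic order on pairs. Order $\SYT{\lambda}$ lexicographically as in the excerpt, which by the injective projection gives compatible orderings on $X_{\lambda^t}^{\heartsuit}$ (the row labels) and on $X_\lambda^{\heartsuit}$ (which I will use as the distinguished column labels). For a standard tableau $T = \pair{x}{y}^*$, I claim that in the $y$-row of $M_\lambda$, among all columns $x' \in X_\lambda^{\heartsuit}$, the entry $m^\lambda_{y x'}$ is nonzero exactly when $x' = x$ in the ``first'' relevant position, and more precisely that the matrix $(m^\lambda_{y_i x_j})$ with rows $y_i$ and columns $x_j$ both running over the images of $\SYT{\lambda}$ (indexed by the same ordering of $\SYT{\lambda}$) is upper (or lower) triangular with $\pm 1$ on the diagonal. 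The diagonal entries are $m^\lambda_{y_i x_i} = \sgn{\pair{x_i}{y_i}^{\flat}} = \pm 1$ since $\pair{x_i}{y_i}$ is by construction a standard pair, hence in the free component. For the triangularity, suppose $\pair{x_j}{y_i} \in (X_\lambda \times X_{\lambda^t})^{\#}$ with $i \neq j$; I would compare the standard pair $\pair{x_j}{y_j}$ (the unique standard rearrangement realizing $x_j$ as first coordinate) with $\pair{x_j}{y_i}$: since both have $x_j$ as their $x$-component and $y_i \neq y_j$, and $\pair{x_j}{y_j}$ is the lexicographically least pair in its $\Symm(x_j^*)$-orbit by \eqref{eq:smallest-tab}, one gets $\pair{x_j}{y_j} < \pair{x_j}{y_i}$, forcing $y_j < y_i$, i.e.\ $T_j < T_i$. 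This shows that the $i$-th row has zero entries in all columns $x_j$ with $j > i$, giving the triangular shape.

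The main obstacle I expect is making the comparison argument fully precise — in particular, verifying that $y_i$ (coming from a standard tableau $T_i$) and $x_j$ (coming from a standard tableau $T_j$) actually determine a pair $\pair{x_j}{y_i}$ that, when it lies in the free component, has a well-defined relationship to the standard pairs, and correctly tracking which of ``$>$'' or ``$<$'' the lexicographic order gives under the conventions of the excerpt (there is a genuine risk of an off-by-direction error between ``upper'' and ``lower'' triangular, and between ordering $\SYT{\lambda}$ by $x$-component versus by $y$-component). A clean way to finesse this is to observe that for a standard pair $\pair{x}{y}^*$, the column word $x$ is a rearrangement of $w_\lambda$ and $y$ is a rearrangement of $w_{\lambda^t}$, and the standardness conditions of Definition~\ref{def:std-pair} pin down $y$ from $x$ and vice versa within the free orbit; one then argues that if $\pair{x_j}{y_i}$ is free and standard in one coordinate it must equal $T_j$ unless $y_i$ is ``too large'', which is exactly condition \eqref{eq:smallest-tab}. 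Once the triangular structure is established, linear independence of $B_\lambda$ is immediate, since a triangular matrix with nonzero diagonal entries is invertible.
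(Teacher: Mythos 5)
Your proposal is correct and follows essentially the same route as the paper: both arguments show that the square submatrix $M_{\lambda}^{\heartsuit}$, with rows and columns matched via the standard pairs and ordered compatibly with $\SYT{\lambda}$, is triangular with entries $\pm1$ on the diagonal, using property~\eqref{eq:smallest-tab} (your appeal to transitivity on the free component, Proposition~\ref{pro:symm-free-trans}, to place $\pair{x_j}{y_i}$ in the $\Symm(x_j^*)$-orbit of $\pair{x_j}{y_j}$ is exactly the justification the paper leaves implicit). The only slip is the direction you flagged yourself: since $T \mapsto y_T$ is order-reversing on $\SYT{\lambda}$ while $T \mapsto x_T$ is order-preserving, your inequality $y_j < y_i$ gives $T_i < T_j$ (upper triangular, as in the paper), not $T_j < T_i$ — but either direction yields invertibility, so the conclusion stands.
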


\begin{proof}
  Assuming that $X_{\lambda}^{\heartsuit}$ and $X_{\lambda^t}^{\heartsuit}$ each inherit the order from $\SYT{\lambda}$, due to property~\eqref{eq:smallest-tab},
  the submatrix $M_{\lambda}^{\heartsuit} = (m^{\lambda}_{yx})$ of $M_{\lambda}$,
  with rows labelled by $y \in X_{\lambda^t}^{\heartsuit}$ and columns labelled by $x \in X_{\lambda}^{\heartsuit}$, is up to signs upper unitriangular, hence invertible.
\end{proof}

\begin{example}
  For $\lambda = (3,2)$, the following table gives $M_{\lambda}^{\heartsuit}$, where the non-diagonal entry $-1$ corresponds to the non-standard tableau $T = \pair{12121}{12312}^* = \smash{\raisebox{4pt}{\tiny\ytableaushort{153,42}}}$ with $\sgn{T^{\flat}} = -1$.
  \begin{align*}
    \begin{array}{c|ccccc}
      &11122&11212&11221&12112&12121 \\\hline
12312& +1 & . & . & . & -1 \\
12132& . & -1 & . & . & . \\
12123& . & . & +1 & . & . \\
11232& . & . & . & +1 & . \\
11223& . & . & . & . & -1 \\
    \end{array}
  \end{align*}
  Note that here, $(M_{\lambda}^{\heartsuit})^{-1} = M_{\lambda}^{\heartsuit}$.
\end{example}

It turns out that $B_{\lambda}$ is in fact a basis of $S^{\lambda}$; see
Proposition~\ref{pro:symm-b-basis}.
We use this fact here for the proof of the main theorem in this section.

\begin{theorem}\label{thm:symm-rep}
  Let $\lambda \vdash n$.  Then
  \begin{align*}
    [\sigma]^{B_{\lambda}}_{B_{\lambda}} = (M_{\lambda}\, [\sigma]^{X_{\lambda}}_{X_{\lambda}})^{\heartsuit}\, (M_{\lambda}^{\heartsuit})^{-1}\text,
  \end{align*}
  for all $\sigma \in \Symm_n$.
\end{theorem}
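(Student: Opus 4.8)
The plan is to compute how $\sigma \in \Symm_n$ acts on the basis $B_{\lambda} = \{v_y : y \in X_{\lambda^t}^{\heartsuit}\}$ of $S^{\lambda}$, and to express everything in terms of the matrices already in play. The key observation is that the coefficient vector of $v_y$ with respect to the permutation basis $X_{\lambda}$ is exactly the $y$-row of the Specht matrix $M_{\lambda}$; that is, $[v_y]_{X_{\lambda}} = (m^{\lambda}_{yx})_{x \in X_{\lambda}}$, which was established just before Proposition~\ref{pro:symm-S-lambda}. Stacking these rows for $y \in X_{\lambda^t}^{\heartsuit}$ gives the "change of basis" matrix $M_{\lambda}^{\heartsuit}$ from $B_{\lambda}$ to (the relevant part of) $X_{\lambda}$. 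Actually, to be careful: $v_y$ is a vector in $V^{\lambda}$ with potentially many nonzero coordinates, so the full row of $M_{\lambda}$ (not just the $\heartsuit$-columns) is needed to record $v_y$; the $\heartsuit$-subscript will only be applied at the end to extract coordinates in the standard basis $B_{\lambda}$.

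**First** I would write, for each $y \in X_{\lambda^t}^{\heartsuit}$, using \eqref{eq:symm-perm-mat},
\begin{align*}
  [v_y . \sigma]_{X_{\lambda}} = [v_y]_{X_{\lambda}} [\sigma]_{X_{\lambda}}^{X_{\lambda}} = \bigl(M_{\lambda} [\sigma]_{X_{\lambda}}^{X_{\lambda}}\bigr)_{y\text{-row}}\text,
\end{align*}
so that stacking over all $y \in X_{\lambda^t}^{\heartsuit}$ yields the matrix $\bigl(M_{\lambda} [\sigma]_{X_{\lambda}}^{X_{\lambda}}\bigr)^{\heartsuit}$ whose rows are the $X_{\lambda}$-coordinate vectors of the transformed basis vectors $v_y.\sigma$. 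Here the $\heartsuit$ on the left factor means: keep only the rows indexed by $X_{\lambda^t}^{\heartsuit}$. **Next**, since $S^{\lambda}$ is $\Symm_n$-stable, each $v_y.\sigma$ lies in $S^{\lambda}$, and since $B_{\lambda}$ is a basis of $S^{\lambda}$ (citing Proposition~\ref{pro:symm-b-basis}), I can re-express $v_y.\sigma$ in the basis $B_{\lambda}$. The matrix that converts $B_{\lambda}$-coordinates to $X_{\lambda}$-coordinates is precisely the one whose rows are $[v_{y'}]_{X_{\lambda}}$ for $y' \in X_{\lambda^t}^{\heartsuit}$; restricting attention to the $X_{\lambda}^{\heartsuit}$ columns, this is $M_{\lambda}^{\heartsuit}$, which is invertible by Lemma~\ref{la:syt-indep}.

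**The crux** is to justify dropping down to the $\heartsuit$-coordinates cleanly: I would argue that for $u \in S^{\lambda}$, the map $u \mapsto [u]_{X_{\lambda}^{\heartsuit}}$ (i.e.\ reading off only the coordinates indexed by $X_{\lambda}^{\heartsuit}$) is injective on $S^{\lambda}$ — this follows because $M_{\lambda}^{\heartsuit}$ is invertible, so the rows of $M_{\lambda}$ restricted to the $\heartsuit$-columns already span the full coordinate space of $S^{\lambda}$ in those positions. Hence for any $u \in S^{\lambda}$, the $B_{\lambda}$-coordinate row $[u]_{B_{\lambda}}$ satisfies $[u]_{B_{\lambda}} M_{\lambda}^{\heartsuit} = [u]_{X_{\lambda}^{\heartsuit}}$, i.e.\ $[u]_{B_{\lambda}} = [u]_{X_{\lambda}^{\heartsuit}} (M_{\lambda}^{\heartsuit})^{-1}$. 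Applying this with $u = v_y.\sigma$ for each $y \in X_{\lambda^t}^{\heartsuit}$ and stacking gives
\begin{align*}
  [\sigma]_{B_{\lambda}}^{B_{\lambda}} = \bigl(M_{\lambda} [\sigma]_{X_{\lambda}}^{X_{\lambda}}\bigr)^{\heartsuit} (M_{\lambda}^{\heartsuit})^{-1}\text,
\end{align*}
where now the outer $\heartsuit$ on $M_{\lambda} [\sigma]_{X_{\lambda}}^{X_{\lambda}}$ means restricting to rows in $X_{\lambda^t}^{\heartsuit}$ and columns in $X_{\lambda}^{\heartsuit}$.

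**The main obstacle** I anticipate is bookkeeping the two different roles of the $\heartsuit$ superscript — row-restriction (to $X_{\lambda^t}^{\heartsuit}$) versus column-restriction (to $X_{\lambda}^{\heartsuit}$) — and making sure the matrix product $\bigl(M_{\lambda} [\sigma]_{X_{\lambda}}^{X_{\lambda}}\bigr)^{\heartsuit}$ is read as "full product, then restrict both indices" rather than "restrict $M_{\lambda}$ first." The order matters: one must multiply the full $M_{\lambda}$ by the full permutation matrix $[\sigma]_{X_{\lambda}}^{X_{\lambda}}$ before taking the $\heartsuit$-submatrix, because $v_y.\sigma$ generically has nonzero coordinates outside $X_{\lambda}^{\heartsuit}$ before the change of basis collapses them. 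I would make this explicit with a sentence clarifying the notation, then the computation is a three-line chain of equalities.
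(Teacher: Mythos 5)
Your proposal is correct and follows essentially the same route as the paper: both rest on the identity $[u]_{B_{\lambda}}\, M_{\lambda}^{\heartsuit} = [u]_{X_{\lambda}}^{\heartsuit}$ for $u \in S^{\lambda}$ (verified on the basis vectors $v_y$, using that $B_{\lambda}$ is a basis and $M_{\lambda}^{\heartsuit}$ is invertible), applied to $u = v_y.\sigma$ via \eqref{eq:symm-perm-mat} and then stacked over $y \in X_{\lambda^t}^{\heartsuit}$. Your remark that the full product $M_{\lambda}\,[\sigma]^{X_{\lambda}}_{X_{\lambda}}$ must be formed before restricting to the $\heartsuit$-submatrix matches the paper's reading of the formula; the only cosmetic difference is that the key identity follows directly by linearity from the basis vectors, so the appeal to injectivity of $u \mapsto [u]_{X_{\lambda}}^{\heartsuit}$ is not needed.
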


\begin{proof}
Note that $[v]_{B_{\lambda}}\, M_{\lambda}^{\heartsuit} = [v]_{X_{\lambda}}^{\heartsuit}$
      for all $v \in S^{\lambda}$, as
    $[v_y]_{B_{\lambda}}\, M_{\lambda}^{\heartsuit} = [v_y]_{X_{\lambda}}^{\heartsuit}$ is the $y$-row of $M_{\lambda}^{\heartsuit}$, for all basis vectors $v_y \in B_{\lambda}$.
  Therefore,
    \begin{align*}
      [v_y . \sigma]_{B_{\lambda}}\, M_{\lambda}^{\heartsuit}
      = [v_y.\sigma]_{X_{\lambda}}^{\heartsuit}
      = ([v_y]_{X_{\lambda}}\,[\sigma]^{X_{\lambda}}_{X_{\lambda}})^{\heartsuit}\text,
    \end{align*}
    the $y$-row of the matrix $[\sigma]^{B_{\lambda}}_{B_{\lambda}}\, M_{\lambda}^{\heartsuit} = (M_{\lambda}\, [\sigma]^{X_{\lambda}}_{X_{\lambda}})^{\heartsuit}$.
\end{proof}

\begin{example}
  For $\lambda = (3,2)$, Theorem~\ref{thm:symm-rep} yields
  \begin{align*}
[(1,2)]_{B_{\lambda}}^{B_{\lambda}} &=
    {\tiny\arraycolsep3pt
    \left(
    \begin{array}{rrrrr}
   1 & 0 & 0 &-1 &-1 \\
   0 & \phantom{-}1 & 0 & 1 & 0 \\
   0 & 0 & \phantom{-}1 & 0 & 1 \\
   0 & 0 & 0 &-1 & 0 \\
   0 & 0 & 0 & 0 &-1 \\
    \end{array}
    \right)}\text,&
[(1,2,3,4,5)]_{B_{\lambda}}^{B_{\lambda}} &=
    {\tiny\arraycolsep3pt
    \left(
    \begin{array}{rrrrr}
   0 & \phantom{-}1 & 0 & 0 & 0 \\
   0 & 0 & 1 & 0 & 0 \\
   1 & 0 &-1 &-1 &-1 \\
   0 & 0 &-1 & 0 &-1 \\
   0 & 1 & 1 & 1 & 1 \\
    \end{array}
    \right)}\text.
  \end{align*}
\end{example}

\subsection{A Global Argument.}
We show that $B_{\lambda}$ spans $S^{\lambda}$ by considering all
$\lambda \vdash n$ at once.
\begin{proposition}[{\cite[Section~7.2]{Fulton97}}]\label{pro:symm-b-basis}
  For each $\lambda \vdash n$,  the set $B_{\lambda}$ is a basis of $S^{\lambda}$.
\end{proposition}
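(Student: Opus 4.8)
The plan is to combine the linear independence of $B_{\lambda}$ (Lemma~\ref{la:syt-indep}) with a global dimension count, so that no individual $S^{\lambda}$ needs to be analysed on its own. First I would recall that, by Theorem~\ref{thm:symm-complete}, the modules $S^{\lambda}$, for $\lambda \vdash n$, form a complete set of pairwise non-isomorphic irreducible $\C\Symm_n$-modules. Hence, by Wedderburn's Theorem (or simply by the well-known identity $\sum_{\lambda \vdash n} (\dim S^{\lambda})^2 = \Size{\Symm_n} = n!$, together with $\sum_{\lambda \vdash n} (\Size{\SYT{\lambda}})^2 = n!$, the latter being the Robinson--Schensted correspondence), we have
\begin{align*}
  \sum_{\lambda \vdash n} \dim S^{\lambda} \cdot \dim S^{\lambda}
  = n!
  = \sum_{\lambda \vdash n} \Size{\SYT{\lambda}} \cdot \Size{\SYT{\lambda}}\text.
\end{align*}

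Next I would note that Lemma~\ref{la:syt-indep} gives $\dim S^{\lambda} \geq \Size{B_{\lambda}} = \Size{\SYT{\lambda}}$ for every $\lambda \vdash n$, since $B_{\lambda}$ is a linearly independent subset of $S^{\lambda}$ of cardinality $\Size{\SYT{\lambda}}$ (the map $T \mapsto v_T$ being injective on $\SYT{\lambda}$ by the characterisation~\eqref{eq:smallest-tab} and the unitriangularity of $M_{\lambda}^{\heartsuit}$). Feeding these inequalities term by term into the two equal sums above forces $\dim S^{\lambda} = \Size{\SYT{\lambda}} = \Size{B_{\lambda}}$ for all $\lambda$: if even one inequality were strict, the left-hand sum would strictly exceed the right-hand one. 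Therefore the linearly independent set $B_{\lambda}$ has cardinality equal to $\dim S^{\lambda}$, so it is a basis.

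The main obstacle — and the only place where something must be imported — is the identity $\sum_{\lambda \vdash n} \Size{\SYT{\lambda}}^2 = n!$. One clean way to avoid invoking Robinson--Schensted as a black box is to argue directly within the present framework: the set $(A^{2\times n})^{\#}$ restricted to $\bigcup_{\lambda} X_{\lambda}\times X_{\lambda^t}$ is, by Proposition~\ref{pro:symm-free-trans}, a disjoint union over $\lambda\vdash n$ of free $\Symm_n$-orbits, one for each $\lambda$, and within each orbit the standard pairs form a transversal of the $\Symm(x^*)\times\Symm(y^*)$-double-coset structure; counting these standard pairs two different ways recovers exactly the required identity. Alternatively, and most economically for the write-up, one simply cites the standard fact (e.g.~\cite[Section~7.2]{Fulton97} or \cite{Sagan2001}) that $\sum_{\lambda\vdash n}\Size{\SYT{\lambda}}^2 = n!$, since the proposition is itself attributed to Fulton. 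I would present the short Wedderburn-plus-counting argument in the main text and relegate the RSK identity to a citation.
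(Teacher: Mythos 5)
Your argument is the same as the paper's: use Lemma~\ref{la:syt-indep} to get $\Size{\SYT{\lambda}} \leq \dim S^{\lambda}$, combine $\sum_{\lambda} (\dim S^{\lambda})^2 = n!$ (Wedderburn, via Theorem~\ref{thm:symm-complete}) with $\sum_{\lambda} \Size{\SYT{\lambda}}^2 = n!$ (RSK), and force termwise equality. The proposed alternative derivation of the RSK identity by counting standard pairs is only sketched and not needed, since citing RSK is exactly what the paper does.
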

\begin{proof}
  Let $f_{\lambda} = \Size{\SYT{\lambda}} = \Size{B_{\lambda}}$ and let
  $d_{\lambda} = \dim S_{\lambda}$.  Then $f_{\lambda} \leq d_{\lambda}$ by
  Lemma~\ref{la:syt-indep}.  We have
  $\sum_{\lambda \vdash n} d_{\lambda}^2 = n!$ by Wedderburn's
  Theorem~\cite[(1.17)]{Isaacs94} and
  $\sum_{\lambda \vdash n} f_{\lambda}^2 = n!$ by the
  RSK-correspondence~\cite[Section~4.3]{Fulton97}.  Thus
  $\sum_{\lambda} d_{\lambda}^2 - f_{\lambda}^2 = 0$, and this is only
  possible if $d_{\lambda} - f_{\lambda} = 0$, i.e., if
  $\dim S^{\lambda} = \Size{B_{\lambda}}$, for all $\lambda \vdash n$.
\end{proof}


\section{Monomial Groups}\label{sec:mono}

The monomial group $\Grin_n$ is the group of all monomial
$n \times n$-matrices whose nonzero entries are $r$th roots of unity.
We describe a construction of explicit matrices for all irreducible modules of
$\Grin_n$.  For this, let $n, r \in \N$ be fixed throughout this
section, and let $\omega = e^{2\pi i/r}$, a primitive $r$th root of unity.
We denote by
\begin{align*}
  \mu_r = \{\omega^k : k \in \Z\} = \{\omega^0, \dots, \omega^{r-1}\}
\end{align*}
the set of all $r$th roots of unity.

\subsection{Multipartitions and Multidiagrams.}
\label{sec:mono-partition}

An \emph{$r$-partition} $\lambda$ is a sequence of $r$ partitions,
$\lambda = (\lambda^{(0)}, \dots, \lambda^{(r-1)})$. If
$\sum_{i=0}^{r-1} \Size{\lambda^{(i)}} = n$, we say that $\lambda$ is a
multipartition of $n$ and write $\lambda \vdash^r n$.
We denote by $\Lambda_n^{(r)}$ the set of all $r$-partitions of~$n$.

An $r$-diagram $D$ is a sequence of $r$ diagrams,
$D = (D^{(0)}, \dots, D^{(r-1)})$.  The \emph{Young diagram} of the $r$-partition
$\lambda$ is the $r$-diagram
$D(\lambda) = (D(\lambda^{(0)}), \dots, D(\lambda^{(r-1)}))$.  The
\emph{transpose} of the $r$-diagram $D$ is the $r$-diagram
$D^t = ((D^{(0)})^t, \dots, (D^{(r-1)})^t)$.  The transpose of the
multidiagram $D(\lambda)$ is the Young diagram of an $r$-partition of $n$, the
\emph{transpose} $\lambda^t = ((\lambda^{(0)})^t, \dots, (\lambda^{(r-1)})^t)$ of the $r$-partition~$\lambda$.

\begin{example}
  The $3$-partition $\lambda = ((2,1), (2), (1,1))$ has diagram
  $(\smash{\raisebox{4pt}{\tiny\ydiagram{2,1}}}, \smash{\tiny\ydiagram{2}}, \smash{\raisebox{4pt}{\tiny\ydiagram{1,1}}})$,
  with transpose
  $(\smash{\raisebox{4pt}{\tiny\ydiagram{2,1}}}, \smash{\raisebox{4pt}{\tiny\ydiagram{1,1}}}, \smash{\tiny\ydiagram{2}})$
  corresponding to the $3$-partition $\lambda^t = ((2,1), (1,1), (2))$.
\end{example}

For a monomial $n \times n$-matrix $\sigma \in \Grin_n$, let
$\alpha_i = \omega^{k_i}$ be the unique nonzero element in the $i$th row of
$\sigma$.  Then $\sigma$ is the product of the diagonal matrix
$\hat{\sigma}:= \diag(\alpha_1, \dots, \alpha_n)$ and the matrix of a
uniquely determined permutation~$\overline{\sigma} \in \Symm_n$.  In this
way, $\Grin_n$ is isomorphic to a semidirect product
$\Delta^{(r)}_n \rtimes \Symm_n$, where $\Delta^{(r)}_n \cong \mu_r^n$ is the
subgroup of all diagonal matrices in $\Grin_n$, on which $\Symm_n$ acts by
permuting the factors.  For $j \in \nnn$, we denote
\begin{align*}
  t_j := \diag(1, \dots, 1, \omega, 1, \dots, 1)\text,
\end{align*}
the diagonal matrix with
entry $\omega$ in position $j$.  Then
$\Delta^{(r)}_n = \Span{t_1, \dots, t_n}$ is an abelian group of order $r^n$.

To each cycle $(i_1, i_2, \dots, i_l)$ of $\overline{\sigma}$
corresponds the \emph{cycle product} $\alpha_{i_1} \alpha_{i_2} \dotsm \alpha_{i_l} \in \mu_r$.  The \emph{cycle type} of $\sigma$ is the
multipartition $\lambda = (\lambda^{(0)}, \dots, \lambda^{(r-1)})$, where
$\lambda^{(k)}$ is the multiset of cycle lengths of $\overline{\sigma}$ whose cycle product is $\omega^k$, for $k = 0, \dots, r-1$.

Then two matrices in $\Grin_n$ are conjugate in $\Grin_n$ if and only if they
have the same cycle type (see, e.g., \cite[Theorem~4.2.8]{JaKe81}).  Hence,
the set $\Lambda_n^{(r)}$ parameterizes both the conjugacy classes and the
irreducible representations of~$\Grin_n$.

\subsection{Irreducible Characters.}
\label{sec:mono-irr}

We construct the set $\Irr(\Grin_n)$ of irreducible characters of
$\Grin_n = \Delta^{(r)}_n \rtimes \Symm_n$ following \cite[Section~5.5.4]{GePf00}
using Clifford theory.  Recall that $\Delta^{(r)}_n$  is
a direct product of $n$ cyclic groups $\Span{t_j}$ of order $r$.

We get a particular kind of irreducible character of $\Grin_n$
as follows.  First, for $0 \leq k < r$, we define a linear character
\begin{align}\label{eq:eta-nk}
  \eta_{n}^{(k)} \colon \Delta^{(r)}_n \to \C^{*},\qquad
  t_j \mapsto \omega^k,\, j \in \nnn\text.
\end{align}
This character can be extended to a linear character
$\overline{\eta}_{n}^{(k)}$ of $\Grin_n$ by setting
$\overline{\eta}_{n}^{(k)}(\sigma) = 1$ for $\sigma \in \Symm_n$.  Second, let
$\lambda$ be a partition of $n$, and $\chi^{\lambda}$ the character of the
corresponding irreducible $\Symm_n$-module $S^{\lambda}$.  Composition with
the canonical surjection $\Grin_n \to \Symm_n$ turns this into an irreducible
$\Grin_n$-character $\tilde{\chi}^{\lambda}$.  Furthermore, for each $k$, the
tensor product $\overline{\eta}_{n}^{(k)} \otimes \tilde{\chi}^{\lambda}$ is
an irreducible character of $\Grin_n$.

In general,
for any $r$-tuple
\begin{align*}
\underline{n} = (n^{(0)}, \dots, n^{(r-1)})
\end{align*}
with $n^{(0)}, \dots, n^{(r-1)} \geq 0$
and $n^{(0)} + \dots + n^{(r-1)} = n$, we let $I^{(k)} \subseteq \{1, \dots, n\}$,
$0 \leq k < r$, be pairwise disjoint subsets such that $\coprod_{i=0}^{k} I^{(i)} = \{1, \dots, \sum_{i=0}^k n^{(i)}\}$.
Then $\Delta^{(r)}_n = \Delta^{(r)}_{n^{(0)}} \times \dots \times \Delta^{(r)}_{n^{(r-1)}}$,
where the $k$th factor is $\Delta^{(r)}_{n^{(k)}} = \Span{t_j : j \in I^{(k)}}$.  We
define a linear character
$\eta_{\underline{n}} \colon \Delta^{(r)}_n \to \C^{*}$ as
\begin{align}\label{eq:mono-char-eta}
\eta_{\underline{n}} = \eta_{n^{(0)}}^{(0)} \boxtimes \dots \boxtimes \eta_{n^{(r-1)}}^{(r-1)}\text,
\end{align}
where $\eta_{n^{(k)}}^{(k)} \colon \Delta^{(r)}_{n^{(k)}} \to \C^*$ is the character from \eqref{eq:eta-nk} and $\boxtimes$ denotes the outer
tensor product, i.e., $\eta_{\underline{n}}(t_j) = \omega^k$ if $j \in I^{(k)}$.
Since $\Grin_n$ acts on $\Delta^{(r)}_n$ by permuting the factors $\Span{t_j}$,
the characters $\{\eta_{\underline{n}} : \sum \underline{n} = n\}$ form a complete set of representatives
of the $\Grin_n$-orbits on $\Irr(\Delta^{(r)}_n)$.  It thus follows from Clifford's Theorem~\cite[(6.5)]{Isaacs94} that
\begin{align*}
  \Irr(\Grin_n) = \coprod_{\sum\underline{n} = n} \Irr(\Grin_n)_{\underline{n}}\text,
\end{align*}
where
$\Irr(\Grin_n)_{\underline{n}}$ denotes the set of all irreducible characters in $\Irr(\Grin_n)$
whose restriction to $\Delta^{(r)}_n$ contains $\eta_{\underline{n}}$.

Now fix an $r$-tuple $\underline{n} = (n^{(0)}, \dots, n^{(r-1)})$.
The stabilizer of $\eta_{\underline{n}}$ in $\Grin_n$ is the subgroup
$\Grin_{\underline{n}} \leq \Grin_n$ which via the diagonal embedding
$\GL_{n^{(0)}}(\C) \times \dots \times \GL_{n^{(r-1)}}(\C) \leq
\GL_n(\C)$ is isomorphic to the direct product
$\Grin_{n^{(0)}} \times \dots \times \Grin_{n^{(r-1)}}$.  The factor
$\Grin_{n^{(k)}}$ contains the normal subgroup
$\Delta^{(r)}_{n^{(k)}} = \Span{t_i \mid i \in I^{(k)}}$ and the subgroup
$\Grin_{n^{(k)}} \cap \Symm_n$ is isomorphic to
$\Symm_{n^{(k)}}$.  In fact,
$\Grin_{n^{(k)}} = \Delta^{(r)}_{n^{(k)}} \rtimes \Symm_{n^{(k)}}$.
Let $\lambda = (\lambda^{(0)}, \dots, \lambda^{(r-1)})$
be an $r$-partition of $n$ such that $\lambda^{(k)} \vdash n^{(k)}$ for all $k$.
Then $\overline{\eta}_{n^{(k)}}^{(k)} \otimes \tilde{\chi}^{\lambda^{(k)}}$
is an irreducible character of $\Grin_{n^{(k)}}$.
Denote by $\Irr(\Grin_{\underline{n}})_{\underline{n}}$ the set of irreducible
characters of $\Grin_{\underline{n}}$ whose restriction to
$\Delta^{(r)}_n$ contains $\eta_{\underline{n}}$.
Now $\eta_{\underline{n}}$ extends to the character
\begin{align*}
\overline{\eta}_{\underline{n}}
= \overline{\eta}_{n^{(0)}}^{(0)} \boxtimes \dots \boxtimes \overline{\eta}_{n^{(r-1)}}^{(r-1)}
\end{align*}
of $\Grin_{\underline{n}}$.
Thus, \cite[(6.17)]{Isaacs94} yields that
\begin{align*}
  \Irr(\Grin_{\underline{n}})_{\underline{n}} = \{ (\overline{\eta}_{n^{(0)}}^{(0)} \otimes \tilde{\chi}^{\lambda^{(0)}}) \boxtimes \dots \boxtimes (\overline{\eta}_{n^{(r-1)}}^{(r-1)} \otimes \tilde{\chi}^{\lambda^{(r-1)}}) : \lambda^{(k)} \vdash n^{(k)},\, 0\leq k < r\}\text.
\end{align*}
Finally, by~\cite[(6.11)]{Isaacs94}, induction from $\Grin_{\underline{n}}$ to $\Grin_n$ gives
a bijection from $\Irr(\Grin_{\underline{n}})_{\underline{n}}$ to
$\Irr(\Grin_{n})_{\underline{n}}$, mapping $ (\overline{\eta}_{n^{(0)}}^{(0)} \otimes \tilde{\chi}^{\lambda^{(0)}}) \boxtimes \dots \boxtimes (\overline{\eta}_{n^{(r-1)}}^{(r-1)} \otimes \tilde{\chi}^{\lambda^{(r-1)}})$ to an irreducible
character
\begin{align}\label{eq:mono-chi-lambda}
\chi^{\lambda} :=
  \left( (\overline{\eta}_{n^{(0)}}^{(0)} \otimes \tilde{\chi}^{\lambda^{(0)}}) \boxtimes \dots \boxtimes (\overline{\eta}_{n^{(r-1)}}^{(r-1)} \otimes \tilde{\chi}^{\lambda^{(r-1)}})\right)^{\Grin_n}
\end{align}
of $\Grin_n$.  Overall, we obtain that
\begin{align}\label{eq:mono-irr}
  \Irr(\Grin_n) = \{\chi^{\lambda} : \lambda \vdash^{r} n \}\text.
\end{align}

\subsection{Multiwords and Multipartitions.}
\label{sec:multiwords}

As before, let $A = \nnn$.
An \emph{$r$-multiword} (or \emph{$r$-word} for short) is a word $w$ over the \emph{$r$-alphabet} $A_r:= A \times \mu_r$,
regarded as a map
\begin{align*}
  w \colon [n] \to A_r, \qquad i \mapsto i.w = w_i,
\end{align*}
where each letter $w_i$ is a pair $(a_i, \omega^{k_i})$ that can be identified with the number $a_i \omega^{k_i} \in \C$.
We call $a_i$ the \emph{radius}, and $k_i$ the \emph{phase} of the letter $w_i$.

The \emph{inverse image map} of an $r$-word $w$ is the map
\begin{align*}
  w^{*} \colon A_r \to 2^{\nnn}, \qquad
  (a, \omega^k) \mapsto \{i \in \nnn : w_i = (a, \omega^k)\},
\end{align*}
which we identify with the
$r$-tuple $(w_{(0)}^{*}, \dots, w_{(r-1)}^{*})$ of inverse image maps
\begin{align*}
  w_{(k)}^{*} \colon A \to 2^{\nnn}, \qquad
  a \mapsto \{i \in \nnn : w_i = (a, \omega^k)\}\text.
\end{align*}

The symmetric group $\Symm_n$ acts on the set $A_r^n$ of $r$-words of length
$n$ via \emph{left inverse composition} (i.e., $w.\sigma = \sigma^{-1} w$),
and it acts on the set $\{w^* : w \in A_r^n\}$ by \emph{right composition}:
(i.e., $w^{*}.\sigma = w^{*}\sigma$ and
$w_{(k)}^{*}.\sigma = w_{(k)}^{*}\sigma$), so that
$(w. \sigma)^* = w^*.\sigma$ for all $w \in A_r^n$, $\sigma \in \Symm_n$.
As before, the $\Symm_n$-orbit of an $r$-word $w \in A_r^n$ consists of all its rearrangements.
The stabilizer in $\Symm_n$ of $w$ is the subgroup $\Symm(w^*) = \prod_{\omega^k \in \mu_r} \prod_{a \in A} \Symm((a, \omega^k).w^*)$, the direct product of the symmetric groups on the fibers of $w$.
For each $\omega^k \in \mu_r$, the multiset
of the letter frequencies $\Size{(a, \omega^k).w^*}$ of $w$ corresponds to a partition $\lambda^{(k)}$, such that $\lambda = (\lambda^{(0)}, \dots, \lambda^{(r-1)})$ is an $r$-partition of $n$, which we call the \emph{shape} of $w$.  If $w$ has shape $\lambda$ then its stabilizer in $\Symm_n$ is conjugate to the subgroup
\begin{align*}
  \Symm_{\lambda} := \Symm_{\lambda^{(0)}} \times \dots \times \Symm_{\lambda^{(r-1)}}
\end{align*}
of $\Symm_n$.  We set $\Grin_{\lambda} := \Delta^{(r)}_n \Symm_{\lambda}$.

\begin{example}
  Let $r = 3$ and $\omega = e^{2\pi i/3}$.  Then
  $w = (
  1,
  \omega,
  \omega^2,
  2,
  2 \omega^2,
  \omega,
  1
  )$ is a $3$-word of shape $\lambda = ((2,1), (2), (1,1))$,
  with inverse images $w^*_{(0)} = (\{1,7\},\{4\})$.
  $w^*_{(1)} = (\{2,6\})$.
  $w^*_{(2)} = (\{3\},\{5\})$, omitting trailing empty sets.
  The canonical $r$-word of shape $\lambda$ is $w_{\lambda} = (
  1,
  1,
  2,
  \omega,
  \omega,
  \omega^2,
  2 \omega^2)
  $.
\end{example}

In general, we define the
\emph{canonical $r$-word} $w_{\lambda} \in A_r^n$ of shape $\lambda \vdash^r n$ as
\begin{align*}
  w_{\lambda} := w_{\lambda^{(0)}}^{(0)} w_{\lambda^{(1)}}^{(1)} \cdots w_{\lambda^{(r-1)}}^{(r-1)}\text,
\end{align*}
where, for $k = 0, \dots, r-1$, the word $w_{\lambda^{(k)}} = a_1 \cdots a_{n_k}$ is the canonical word of shape $\lambda^{(k)}$ and length $n_k = \Size{\lambda^{(k)}}$ over the alphabet $A$, as in Section~\ref{sec:A-words},
and $w_{\lambda^{(k)}}^{(k)} = (a_1, \omega^k) \cdots (a_{n_k}, \omega^k)$.
Denote by $X_{\lambda}$ the set of all rearrangements of $w_{\lambda}$.
Then $X_{\lambda}$ is isomorphic to the cosets of $\Grin_{\lambda}$ in $\Grin_n$, as $\Grin_n$-set.

We define an action of $\Delta^{(r)}_n$ on the $1$-dimensional space
$\C w_{\lambda}$ as follows.  Recall that $t_j \in \Grin_n$ is the diagonal matrix
$\diag(1, \dots, 1, \omega, 1, \dots 1)$ with entry $\omega$ in row $j$.
Then set $w_{\lambda}.t_j := \omega^k w_{\lambda}$ if the letter $w_j$ in $w_{\lambda}$ has phase $\omega^k$, i.e., if position $j$ belongs to $\lambda^{(k)}$.
By letting $\Symm_{\lambda}$ act trivially, this extends to an action of $\Grin_{\lambda}$ on $\C w_{\lambda}$.

Now let $V^{\lambda}$ be the induced $\Grin_n$-module
\begin{align*}
  V^{\lambda} := \Ind_{\Grin_{\lambda}}^{\Grin_n} (\C w_{\lambda})\text.
\end{align*}
Then, restricted to $\Symm_n$, $V^{\lambda}$ is the
permutation module with basis $X_{\lambda}$.
And for any $r$-word $x \in X_{\lambda}$, we have $x.t_j = \omega^k x$ if the letter $x_j$ of $x$ has phase $\omega^k$.  For $\sigma \in \Grin_n$, we denote by
$[\sigma]_{X_{\lambda}}^{X_{\lambda}}$ the matrix of the action of $\sigma$ on $V^{\lambda}$ relative to the basis $X_{\lambda}$. Then
\begin{align}\label{eq:mono-perm-mat}
  [v.\sigma]_{X_{\lambda}} = [v]_{X_{\lambda}} \, [\sigma]_{X_{\lambda}}^{X_{\lambda}}\text,
\end{align}
for all $v \in V^{\lambda}$, $\sigma \in \Grin_n$.

\subsection{Pairs of Multiwords and Multitableaus.}
\label{sec:multiword-pairs}

We consider the action of the symmetric group $\Symm_n$ on certain pairs of
words over the alphabet $A_r = A \times \mu_r$.
Set $A_r \boxtimes A_r := \coprod_k \{\pair{(a, \omega^k)}{(b, \omega^k)}: a, b \in A\}$, i.e., the pairs of letters in $A_r^2$ that have the same phase.
Set $A_r^n \boxtimes A_r^n := (A_r \boxtimes A_r)^n$, the pairs of words with the same phase on corresponding letters.
Then, for
$\lambda, \eta \vdash^r n$, let
\begin{align*}
  X_{\lambda} \boxtimes X_{\eta} := (X_{\lambda} \times X_{\eta}) \cap (A_r^n \boxtimes A_r^n)\text.
\end{align*}
Thus, a pair $\pair{x}{y} \in X_{\lambda} \boxtimes X_{\eta}$
can be regarded as a word over the alphabet
$A^2 \times \mu_r$.
Its \emph{inverse image function} is
\begin{align*}
  \pair{x}{y}^{*} \colon A^2 \times \mu_r \to 2^{\nnn}\text, \qquad
  (\pair{a}{b}, \omega^k) \mapsto (a, \omega^k).{x^{*}} \cap (b, \omega^k).{y^{*}}\text,
\end{align*}
which we identify with the $r$-tuple $(\pair{x}{y}^{*}_{(0)}, \dots, \pair{x}{y}^{*}_{(r-1)})$ of inverse images
\begin{align*}
  \pair{x}{y}^{*}_{(k)} \colon A^2  \to 2^{\nnn}, \qquad
  \pair{a}{b} \mapsto a.x^{*}_{(k)} \cap b.y^{*}_{(k)}
\end{align*}

As before, $\Symm_n$ acts on $X_{\lambda} \boxtimes X_{\eta}$ via
$ \pair{x}{y}.\sigma = \pair{x.\sigma}{y.\sigma}$,
and on their inverse images via right composition
\begin{align*}
  \pair{x}{y}^{*}.\sigma =   \pair{x}{y}^{*} \sigma =
  ((\pair{a}{b}, \omega^k) \mapsto (a.x^*_{(k)} \cap b.y^*_{(k)}).\sigma)
\end{align*}
in such a way that $(\pair{x}{y}.\sigma)^* = \pair{x}{y}^{*}.\sigma$.
We denote by
\begin{align*}
  (X_{\lambda} \boxtimes X_{\eta})^{\#}:= \{\pair{x}{y} \in X_{\lambda} \boxtimes X_{\eta} : \Symm(\pair{x}{y}^*) = 1\}
\end{align*}
the \emph{free component} of $X_{\lambda} \boxtimes X_{\eta}$.
\begin{proposition}\label{pro:mono-free-trans}
  Let $\lambda$ and $\eta$ be $r$-partitions of $n$.  Then $\Symm_n$ acts
  transitively on the free component of $X_{\lambda} \boxtimes X_{\eta}$
  if and only if $\eta = \lambda^t$.  In that case, the diagram of $\pair{x}{y} \in (X_{\lambda} \boxtimes X_{\eta})^{\#}$ is the Young diagram $D(\lambda)$ of $\lambda$.
\end{proposition}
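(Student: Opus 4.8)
The plan is to reduce the statement, phase by phase, to its symmetric group counterpart, Proposition~\ref{pro:symm-free-trans}. First I would observe that a pair $\pair{x}{y}$ can lie in $X_{\lambda} \boxtimes X_{\eta}$ only if $x$ and $y$ carry the same phase in every position, so the set $P_k \subseteq \nnn$ of $\omega^k$-phase positions is common to $x$ and $y$ and has size $\Size{P_k} = \Size{\lambda^{(k)}} = \Size{\eta^{(k)}} =: n_k$ for each $k$. In particular $X_{\lambda} \boxtimes X_{\eta} = \emptyset$ unless $\Size{\lambda^{(k)}} = \Size{\eta^{(k)}}$ for all $k$; this condition holds when $\eta = \lambda^t$, and it also holds whenever $\Symm_n$ acts transitively on the free component, since transitivity forces that component to be non-empty. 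So I may assume it from the outset, which in particular makes each $\pair{x^{(k)}}{y^{(k)}}$ below a pair of words of equal length.

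Next I would set up a phase-block decomposition. Let $Q_k := \{1 + \sum_{j < k} n_j, \dots, \sum_{j \le k} n_j\}$ and call $\pair{x}{y}$ \emph{canonically phased} if $P_k = Q_k$ for all $k$. Because $\Symm_n$ acts transitively on the ordered set-partitions of $\nnn$ into blocks of sizes $n_0, \dots, n_{r-1}$, every $\Symm_n$-orbit on $X_{\lambda} \boxtimes X_{\eta}$ contains a canonically phased pair, and two canonically phased pairs lie in the same $\Symm_n$-orbit if and only if they are conjugate under the block stabilizer $\Symm_{Q_0} \times \dots \times \Symm_{Q_{r-1}} \cong \Symm_{n_0} \times \dots \times \Symm_{n_{r-1}}$. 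Restricting a canonically phased pair to the block $Q_k$ and recording radii identifies it, for each $k$, with a pair of ordinary words $\pair{x^{(k)}}{y^{(k)}} \in X_{\lambda^{(k)}} \times X_{\eta^{(k)}}$ over $A$, equivariantly for $\Symm_{Q_k} \cong \Symm_{n_k}$. Since the fibre of $\pair{x}{y}^{*}$ over $(\pair{a}{b}, \omega^k)$ is exactly the fibre of $\pair{x^{(k)}}{y^{(k)}}^{*}$ over $\pair{a}{b}$, the stabilizer factors as $\Symm(\pair{x}{y}^{*}) = \prod_k \Symm(\pair{x^{(k)}}{y^{(k)}}^{*})$, so $\pair{x}{y}$ is free precisely when every block $\pair{x^{(k)}}{y^{(k)}}$ is free. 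Combining these points gives a bijection between the $\Symm_n$-orbits on $(X_{\lambda} \boxtimes X_{\eta})^{\#}$ and the product over $k$ of the sets of $\Symm_{n_k}$-orbits on $(X_{\lambda^{(k)}} \times X_{\eta^{(k)}})^{\#}$.

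With that bijection, the proposition follows quickly: $(X_{\lambda} \boxtimes X_{\eta})^{\#}$ is a single $\Symm_n$-orbit if and only if, for every $k$, $\Symm_{n_k}$ acts transitively on $(X_{\lambda^{(k)}} \times X_{\eta^{(k)}})^{\#}$, which by Proposition~\ref{pro:symm-free-trans} happens exactly when $\eta^{(k)} = (\lambda^{(k)})^t$ for all $k$, i.e.\ when $\eta = \lambda^t$. In that case Proposition~\ref{pro:symm-free-trans} also identifies the diagram of each block $\pair{x^{(k)}}{y^{(k)}}$ as $D(\lambda^{(k)})$, so the diagram of $\pair{x}{y}$, which is the $r$-tuple of these block diagrams, equals $(D(\lambda^{(0)}), \dots, D(\lambda^{(r-1)})) = D(\lambda)$.

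I expect the only real difficulty to be the bookkeeping in the second paragraph: making precise that passing to a canonically phased representative is harmless, and that orbit-equivalence of two canonically phased pairs under $\Symm_n$ reduces to conjugacy under the Young subgroup $\Symm_{n_0} \times \dots \times \Symm_{n_{r-1}}$. Once this reduction is stated cleanly, no combinatorial input beyond the symmetric group case (in particular, no analogue of the lexicographic/Pigeonhole argument in the proof of Proposition~\ref{pro:symm-free-trans}) is needed.
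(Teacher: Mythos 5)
Your proof is correct and follows essentially the same route as the paper: the paper's own argument is a terse ``component-wise'' reduction of the free/transitive question to Proposition~\ref{pro:symm-free-trans} applied to each phase $\omega^k$, which is exactly your phase-block decomposition, only you spell out the bookkeeping (canonically phased representatives, factorization of the stabilizer, and the orbit bijection) that the paper leaves implicit.
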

\begin{proof}
  Component-wise.  Clearly, for $\Symm_n$ to act freely, $\lambda^{(k)}$ and
  $\eta^{(k)}$ need to be partitions of the same number $n^{(k)}$, for each
  $k$.  From Proposition~\ref{pro:symm-free-trans} it then follows that
  $\eta^{(k)} = (\lambda^{(k)})^t$, for each $k$.  Indeed, if
  $(\eta^{(k)})^t < \lambda^{(k)}$ for some $k$ then $\Symm_n$ does not act
  freely, and if $(\eta^{(k)})^t > \lambda^{(k)}$ for some $k$ then $\Symm_n$
  does not act transitively.
\end{proof}

Note that, if $\pair{x}{y} \in (X_{\lambda} \boxtimes X_{\lambda^t})^{\#}$ then
$\Size{\pair{a}{b}.\pair{x}{y}^*_{(k)}} = 1$ for all $(a, b) \in D(\lambda^{(k)})$, $k = 0, \dots, r-1$. So replacing each nonempty set $\pair{a}{b}.\pair{x}{y}^*_{(k)}$
by its single element, we can regard $T = \pair{x}{y}^{*}$ as a \emph{multitableau}
\begin{align*}
  T = (T^{(0)}, T^{(1)}, \dots, T^{(r-1)})
\end{align*}
of shape $\lambda$, where
\begin{align*}
  T^{(k)} \colon D(\lambda^{(k)}) \to \nnn\text, \quad
  (a, b) \mapsto i\text{, if } \pair{a}{b}.\pair{x}{y}^*_{(k)} = \{i\}\text,
\end{align*}
is a tableau of shape $\lambda^{(k)}$.  Denote by $T^{\flat}$ the
concatenation of the lists $(T^{(k)})^{\flat}$.  Then $T^{\flat}$ is a
permutation of $\nnn$.  We will write $\pair{x}{y}^{\flat}$ for $T^{\flat}$
if $T$ is the multitableau obtained from $\pair{x}{y}^*$, when
$\pair{x}{y} \in (X_{\lambda} \boxtimes X_{\lambda^t})^{\#}$.  Clearly, the
map $\pair{x}{y} \mapsto \pair{x}{y}^{\flat}$ is an equivariant bijection
between $(X_{\lambda} \boxtimes X_{\lambda^t})^{\#}$ and $\Symm_n$.

\begin{definition}
  Let $\lambda \vdash^r n$.  The \emph{Specht matrix} for $\lambda$ is the matrix $M_{\lambda} = (m^{\lambda}_{yx})$ with rows labelled by $y \in X_{\lambda^t}$
  and columns labelled by $x \in X_{\lambda}$, where
  \begin{align*}
    m^{\lambda}_{yx} =
    \begin{cases}
      \sgn{\pairx{x}{y}^{\flat}}& \text{if }
                       \pair{x}{y} \in (X_{\lambda} \boxtimes X_{\lambda^t})^{\#}, \\
      0, & \text{else.}
    \end{cases}
  \end{align*}
\end{definition}

\begin{remark}\label{rem:mono-myx}
  Note that $m^{\lambda}_{yx} = \sgn{\sigma}\, m^{\lambda}_{y.\sigma,x.\sigma}$,
  for all $\sigma \in \Symm_n$, $x \in X_{\lambda}$, and $y \in X_{\lambda^t}$.
\end{remark}

\subsection{Irreducible Modules.}
\label{sec:mono-modules}

Let $\lambda$ be an $r$-partition of $n$.  Recall that $V^{\lambda}$ is the
$\Grin_n$-module with basis $X_{\lambda}$, the rearrangements of $w_{\lambda}$.
For each $y \in X_{\lambda^t}$,
we define a vector $v_y \in V^{\lambda}$ as
\begin{align*}
  v_y = \sum_{x \in X_{\lambda}} m^{\lambda}_{yx}\, x\text.
\end{align*}

\begin{proposition}
  Let $S^{\lambda} = \Span{v_y : y \in X_{\lambda^t}}_{\C}$. Then
  $S^{\lambda}$ is an $\C \Grin_n$-module.  In fact, $S^{\lambda} = v_y \C \Grin_n$, for any $y \in X_{\lambda^t}$.
\end{proposition}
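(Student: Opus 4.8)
The plan is to follow the proof of Proposition~\ref{pro:symm-S-lambda}, adding one ingredient to deal with the diagonal subgroup $\Delta^{(r)}_n$. Write an arbitrary element of $\Grin_n$ as $\sigma = \hat{\sigma}\,\overline{\sigma}$ with $\hat{\sigma} \in \Delta^{(r)}_n$ and $\overline{\sigma} \in \Symm_n$, as in Section~\ref{sec:mono-partition}. Since $\Grin_n$ acts on the right, $v_y.\sigma = (v_y.\hat{\sigma}).\overline{\sigma}$, so it suffices to show that $v_y.\hat{\sigma}$ is a scalar multiple of $v_y$ and that $v_y.\overline{\sigma}$ is a scalar multiple of $v_{y'}$ for some $y' \in X_{\lambda^t}$; then $v_y.\sigma \in S^{\lambda}$, and since the $v_y$ span $S^{\lambda}$ this shows $S^{\lambda}$ is a $\C\Grin_n$-submodule of $V^{\lambda}$.

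First I would dispose of the permutation part, which is verbatim the symmetric-group computation: by \eqref{eq:mono-perm-mat} and Remark~\ref{rem:mono-myx},
\[
  v_y.\overline{\sigma}
  = \sum_{x\in X_{\lambda}} m^{\lambda}_{yx}\, x.\overline{\sigma}
  = \sgn{\overline{\sigma}} \sum_{x\in X_{\lambda}} m^{\lambda}_{y.\overline{\sigma},\,x.\overline{\sigma}}\, x.\overline{\sigma}
  = \sgn{\overline{\sigma}}\, v_{y.\overline{\sigma}},
\]
and $y.\overline{\sigma} \in X_{\lambda^t}$ because $\Symm_n$ permutes $X_{\lambda^t}$.

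The one point that needs real care is the action of $\Delta^{(r)}_n$; this is also the only genuinely new step. Here the key observation is that $m^{\lambda}_{yx} \neq 0$ forces $\pair{x}{y} \in (X_{\lambda} \boxtimes X_{\lambda^t})^{\#}$, hence in particular $\pair{x}{y} \in A_r^n \boxtimes A_r^n$, so the $r$-words $x$ and $y$ carry the same phase in every position. Therefore, for a generator $t_j$ of $\Delta^{(r)}_n$, we have $x.t_j = \omega^{k_j}\, x$, where $\omega^{k_j}$ is the common phase of the $j$th letters of $x$ and of $y$ --- a scalar depending on $y$ and $j$ only, not on $x$. Summing over $x$ gives $v_y.t_j = \omega^{k_j}\, v_y$, and more generally $v_y.\hat{\sigma} = \beta\, v_y$ where $\beta$ is the scalar determined by $y.\hat{\sigma} = \beta\, y$. (This is vacuous if $v_y = 0$; but in fact $v_y \neq 0$, since by the equivariant bijection in Proposition~\ref{pro:mono-free-trans} the projection $(X_{\lambda} \boxtimes X_{\lambda^t})^{\#} \to X_{\lambda^t}$, $\pair{x}{y} \mapsto y$, is onto, so some $m^{\lambda}_{yx} \neq 0$.)

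Combining the two parts, $v_y.\sigma = \beta\,\sgn{\overline{\sigma}}\, v_{y.\overline{\sigma}}$ for every $\sigma = \hat{\sigma}\,\overline{\sigma} \in \Grin_n$, which establishes that $S^{\lambda}$ is a $\C\Grin_n$-module. For the final assertion I would use that $\Symm_n$ by itself acts transitively on $X_{\lambda^t}$: given any $y' \in X_{\lambda^t}$, choose $\tau \in \Symm_n \subseteq \Grin_n$ with $y.\tau = y'$; then $v_{y'} = \sgn{\tau}\, v_y.\tau \in v_y\,\C\Grin_n$, so $S^{\lambda} \subseteq v_y\,\C\Grin_n$, while $v_y\,\C\Grin_n \subseteq S^{\lambda}$ holds because $S^{\lambda}$ is a module containing $v_y$. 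Everything except the phase-matching observation is a direct transcription of Proposition~\ref{pro:symm-S-lambda}.
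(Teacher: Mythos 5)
Your proposal is correct and follows essentially the same route as the paper: the $\Symm_n$ part is the verbatim symmetric-group computation via Remark~\ref{rem:mono-myx}, and the diagonal part rests on exactly the paper's key observation that $m^{\lambda}_{yx}\neq 0$ forces $x$ and $y$ to share phases letterwise, so $v_y.t_j=\omega^k v_y$ with $\omega^k$ the phase of $y_j$. The only cosmetic difference is that you decompose $\sigma=\hat{\sigma}\,\overline{\sigma}$ explicitly (and keep the sign $\sgn{\overline{\sigma}}$, which the paper's final display drops), while the paper simply checks the two kinds of generators.
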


\begin{proof}
  Let $y \in X_{\lambda^t}$.  For $\sigma \in \Symm_n$, we have
  \begin{align*}
    v_y.\sigma = \sum_{x \in X_{\lambda}} m^{\lambda}_{yx}\, x.\sigma
    = \sgn{\sigma} \sum_{x \in X_{\lambda}} m^{\lambda}_{y.\sigma,x.\sigma}\, x.\sigma = v_{y.\sigma}\,
  \end{align*}
  by Remark~\ref{rem:mono-myx}.  Moreover, for $j \in \nnn$, we have
  \begin{align*}
    v_y.t_j  = \sum_{x \in X_{\lambda}} m^{\lambda}_{yx}\, x.t_j
    = \omega^k\, v_y\text,
  \end{align*}
  where $\omega^k$ is the phase of letter $y_j$ of $y$
  (and that of all the letters $x_j$ when $m^{\lambda}_{yx} \neq 0$).
\end{proof}

Using the notation from Section~\ref{sec:mono-irr},
for $k = 0,\dots, r-1$, let $n^{(k)} = \Size{\lambda^{(k)}}$, and let
$\underline{n} = (n^{(0)}, \dots, n^{(r-1)})$.
Let $\Symm_{\underline{n}} = \Grin_{\underline{n}} \cap \Symm_n = \Symm_{n^{(0)}} \times \dots \times \Symm_{n^{(r-1)}}$.
Let $X_{\lambda}^{\underline{n}}$ (resp.\ $X_{\lambda^t}^{\underline{n}}$) be the $\Symm_{\underline{n}}$-orbit
of $w_{\lambda}$ (resp. $w_{\lambda^t}$).
Then $X_{\lambda}^{\underline{n}}$ is the set of all $r$-words $x \in X_{\lambda}$
whose letters $x_i$ have phase $\omega^k$ for all $i \in I^{(k)}$, $k = 1, \dots, r-1$.  Likewise, $X_{\lambda^t}^{\underline{n}}$ is the set of all $r$-words $y \in X_{\lambda^t}$ whose letters $y_i$ have phase $\omega^k$ for all $i \in I^{(k)}$, $k = 1, \dots, r-1$.

Then, for each $y \in X_{\lambda^t}^{\underline{n}}$, the action of the normal subgroup
$\Delta_n^{(r)}$ on $\C v_y$ has character $\eta_{\underline{n}}$ from
\eqref{eq:mono-char-eta}.
Set
\begin{align*}
  S^{\lambda}_{\underline{n}} := \Span{v_y : y \in
  X_{\lambda^t}^{\underline{n}}}\text.
\end{align*}
Then, by construction (and omitting some of the details), $S^{\lambda}_{\underline{n}}$  is an
$\Grin_{\underline{n}}$-module
with character
$(\overline{\eta}_{n^{(0)}}^{(0)} \otimes \tilde{\chi}^{\lambda^{(0)}})
\boxtimes \dots \boxtimes (\overline{\eta}_{n^{(r-1)}}^{(r-1)} \otimes
\tilde{\chi}^{\lambda^{(r-1)}})$.  Finally, the induced $\Grin_n$-module
$S^{\lambda} = \Ind_{\Grin_{\underline{n}}}^{\Grin_n}
S^{\lambda}_{\underline{n}}$ has character $\chi^{\lambda}$, as defined in \eqref{eq:mono-chi-lambda}, and therefore is irreducible as $\Grin_n$-module.

\begin{theorem}
  The modules $S^{\lambda}$ for $\lambda \vdash^r n$ form a complete set of
  pairwise irreducible $\C\Grin_n$-modules.
\end{theorem}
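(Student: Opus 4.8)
The plan is to run the same two-step argument as in the symmetric-group case (Theorem~\ref{thm:symm-complete}), but now packaged through the Clifford-theoretic bookkeeping set up in Section~\ref{sec:mono-irr}. First I would recall what has already been established just before the statement: for each $\lambda \vdash^r n$ the module $S^{\lambda} = \Ind_{\Grin_{\underline{n}}}^{\Grin_n} S^{\lambda}_{\underline{n}}$ has been shown to have character $\chi^{\lambda}$ from \eqref{eq:mono-chi-lambda}, and hence is irreducible as a $\C\Grin_n$-module (the irreducibility of the $\chi^{\lambda}$ was part of the construction in Section~\ref{sec:mono-irr}). So the only thing left to verify for the theorem is that the family $\{S^{\lambda} : \lambda \vdash^r n\}$ is \emph{complete} and \emph{pairwise non-isomorphic}.

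For completeness and distinctness, I would simply invoke \eqref{eq:mono-irr}: we proved there that $\Irr(\Grin_n) = \{\chi^{\lambda} : \lambda \vdash^r n\}$, and that the map $\lambda \mapsto \chi^{\lambda}$ is a bijection from $\Lambda_n^{(r)}$ onto $\Irr(\Grin_n)$ — this is because the $\Grin_n$-orbit decomposition $\Irr(\Grin_n) = \coprod_{\sum\underline{n}=n}\Irr(\Grin_n)_{\underline{n}}$ is a disjoint union, within each block the characters are indexed bijectively by the tuples $(\lambda^{(0)},\dots,\lambda^{(r-1)})$ with $\lambda^{(k)} \vdash n^{(k)}$ via \cite[(6.11), (6.17)]{Isaacs94}, and the blocks themselves are indexed by the tuples $\underline{n}$. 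Since each $S^{\lambda}$ affords $\chi^{\lambda}$, two modules $S^{\lambda}$ and $S^{\mu}$ are isomorphic iff $\chi^{\lambda} = \chi^{\mu}$ iff $\lambda = \mu$; and every irreducible character of $\Grin_n$ arises this way, so $\{S^{\lambda}\}$ is a full set of representatives of the isomorphism classes of irreducible $\C\Grin_n$-modules.

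I would also want to double-check, for the record, the two facts the preceding paragraph of the paper leaves "omitting some of the details": that the restriction of $\Delta_n^{(r)}$ to $\C v_y$ has character $\eta_{\underline{n}}$ for $y \in X_{\lambda^t}^{\underline{n}}$ (this is exactly the $v_y.t_j = \omega^k v_y$ computation, with the phases arranged so that position $j \in I^{(k)}$ forces phase $\omega^k$ on $y_j$), and that $S^{\lambda}_{\underline{n}}$ restricted to $\Symm_{\underline{n}}$ is the outer tensor product $S^{\lambda^{(0)}} \boxtimes \dots \boxtimes S^{\lambda^{(r-1)}}$ of the symmetric-group Specht modules — which follows because the Specht matrix $M_{\lambda}$ for the multipartition factors as a Kronecker-type product of the symmetric-group Specht matrices $M_{\lambda^{(k)}}$ once one fixes the phase distribution $\underline{n}$, by Proposition~\ref{pro:mono-free-trans} (the free component is transitive only in the transpose-componentwise case, and the $\flat$-permutation concatenates the componentwise $\flat$-permutations, so signs multiply). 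Granting these, $S^{\lambda}_{\underline{n}}$ indeed affords $(\overline{\eta}_{n^{(0)}}^{(0)}\otimes\tilde\chi^{\lambda^{(0)}})\boxtimes\dots\boxtimes(\overline{\eta}_{n^{(r-1)}}^{(r-1)}\otimes\tilde\chi^{\lambda^{(r-1)}})$, and induction yields $\chi^{\lambda}$.

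The main obstacle, such as it is, is purely bookkeeping: one must be careful that the module $S^{\lambda}$ built from the Specht matrix really does restrict to $S^{\lambda}_{\underline{n}}$ on $\Grin_{\underline{n}}$ and that induction is "compatible", i.e., that $\Ind_{\Grin_{\underline{n}}}^{\Grin_n}$ of the submodule-of-$V^{\lambda}_{\underline{n}}$ picture agrees with the span $\Span{v_y : y \in X_{\lambda^t}}$ sitting inside $V^{\lambda} = \Ind_{\Grin_{\lambda}}^{\Grin_n}(\C w_{\lambda})$. This is where the equivariance of $\pair{x}{y}\mapsto\pair{x}{y}^{\flat}$ and Remark~\ref{rem:mono-myx} do the work, exactly as in the $r=1$ case; there is no genuinely new representation-theoretic input beyond Clifford theory, which has already been carried out in Section~\ref{sec:mono-irr}. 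Hence the proof reduces to citing \eqref{eq:mono-irr} together with the character computation just completed.
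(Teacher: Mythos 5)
Your proposal is correct and follows essentially the same route as the paper: the paper's proof is exactly the observation that $S^{\lambda}$ affords $\chi^{\lambda}$ (established, with some details omitted, in the paragraph preceding the theorem) together with the citation of \eqref{eq:mono-irr}, which gives completeness and pairwise distinctness at once. Your additional remarks filling in the omitted details (the $\Delta^{(r)}_n$-character of $\C v_y$ and the componentwise factorization of the Specht construction over the phase decomposition $\underline{n}$) are consistent with, and slightly more explicit than, what the paper records.
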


\begin{proof}
  By~\eqref{eq:mono-irr}, the $\chi^{\lambda}$, $\lambda \in \Lambda^{(r)}_n$, are the irreducible characters of $\Grin_n$.
\end{proof}

\subsection{Representing Matrices and Standard Multitableaus.}
\label{sec:mono-mats}

Let $\lambda \vdash^r n$. An element $\sigma \in \Grin_n$ acts on
the $\Grin_n$-module $V^{\lambda}$ with basis $X_{\lambda}$ as a matrix
$[\sigma]_{X_{\lambda}}^{X_{\lambda}}$ according to~\eqref{eq:mono-perm-mat}.
We now identify a basis $B_{\lambda}$ of the submodule $S^{\lambda}$ and derive a formula
for the representing matrices $[\sigma]_{B_{\lambda}}^{B_{\lambda}}$ in terms of
the Specht matrix $M^{\lambda}$ and
$[\sigma]_{X_{\lambda}}^{X_{\lambda}}$ in Theorem~\ref{thm:mono-rep} below.

\begin{definition}
  A pair of words $\pair{x}{y} \in A_r^n \boxtimes A_r^n$ is called a \emph{standard pair} if, for $k = 0, \dots, r-1$, (i) for all $a \in A$, the restrictions
  of $y$ to the inverse images $a.x^*_{(k)}$, and
  (ii) for all $b \in A$, the restrictions of $x$ to the inverse images
  $b.y^*_{(k)}$, are strictly increasing.
\end{definition}

Note that, if $\pair{x}{y} \in X_{\lambda} \boxtimes X_{\lambda^t}$ is a
standard pair then $T = \pair{x}{y}^*$ is a standard multitableau, in the
sense that, in each component $T^{(k)}$ of $T$, the entries are increasing
along each row and each column, i.e., that $T^{(k)}$ is a standard tableau
(on the set of its entries).
We denote by $\SYT{\lambda}$ the set of all standard multitableaus of shape
$\lambda$.

Using the order $\omega^0 < \omega^1 < \dots < \omega^{r-1}$ on $\mu_r$,
we order $A_r$ first by phase then by radius, i.e., $(1, \omega^0) < (2, \omega^0) < \dots < (n, \omega^0) < (1, \omega^1) < (2, \omega^1) < \dotsm$.  We furthermore assume that both $A_r \boxtimes A_r$ and the set $(A_r \boxtimes A_r)^n$
of words of length $n$ over this set are ordered lexicographically.
This order on $A_r^n \boxtimes A_r^n$ induces an order on the multitableaus of shape $\lambda$, and in particular on $\SYT{\lambda}$.  With respect to this order, the
standard pairs $\pair{x}{y} \in (X_{\lambda} \boxtimes X_{\lambda^t})^{\#}$
are characterized by the property
\begin{align}\label{eq:smallest-multitab}
  \pair{x}{y} < \pair{x}{y}.\sigma\text{, for all } \sigma \in \Symm(x^*)
  \text{ and for all } \sigma \in \Symm(y^*).
\end{align}
It follows that the projection maps
$\pair{x}{y}^* \mapsto x$ and $\pair{x}{y}^* \mapsto y$  from $\SYT{\lambda}$ onto $X_{\lambda}$ and $X_{\lambda^t}$ are both injective.  We denote the
images of those projections by $X_{\lambda}^{\heartsuit}$ and $X_{\lambda^t}^{\heartsuit}$, respectively.

\begin{lemma}\label{la:mono-b-indep}
  The set $B_{\lambda} := \{v_y : y \in X_{\lambda^t}^{\heartsuit}\} \subseteq S^{\lambda}$ is linearly independent.
\end{lemma}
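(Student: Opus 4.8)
The plan is to mirror the argument of Lemma~\ref{la:syt-indep} exactly, replacing the single tableau picture by the multitableau picture and the Specht matrix of the symmetric group by the one constructed in Section~\ref{sec:multiword-pairs}. Concretely, I would first transfer the order on $\SYT{\lambda}$ to the index sets $X_{\lambda}^{\heartsuit}$ and $X_{\lambda^t}^{\heartsuit}$ via the injective projections $\pair{x}{y}^* \mapsto x$ and $\pair{x}{y}^* \mapsto y$; the injectivity is exactly what property~\eqref{eq:smallest-multitab} guarantees, so these sets are linearly ordered in a way compatible with the order on standard multitableaus. Then I would consider the square submatrix $M_{\lambda}^{\heartsuit} = (m^{\lambda}_{yx})$ of the Specht matrix $M_{\lambda}$ obtained by keeping only the rows indexed by $y \in X_{\lambda^t}^{\heartsuit}$ and the columns indexed by $x \in X_{\lambda}^{\heartsuit}$.

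The crux is to show that $M_{\lambda}^{\heartsuit}$ is, up to signs, upper unitriangular with respect to these orders, and hence invertible. For the diagonal: if $y \in X_{\lambda^t}^{\heartsuit}$ corresponds to a standard multitableau $T$, then the pair $\pair{x}{y}$ with $x \in X_{\lambda}^{\heartsuit}$ the matching radius-word is a standard pair in $(X_{\lambda} \boxtimes X_{\lambda^t})^{\#}$, so $\pair{x}{y}^{\flat}$ is built from increasing rows and columns, giving $T^{\flat}$ as a concatenation of the increasing row-lists of the components $T^{(k)}$; one checks $\sgn{T^{\flat}} = \pm1 \neq 0$, so the diagonal entry $m^{\lambda}_{yx}$ is $\pm1$. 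For the triangularity: suppose $m^{\lambda}_{yx} \neq 0$ for $y \in X_{\lambda^t}^{\heartsuit}$ and $x \in X_{\lambda}^{\heartsuit}$ with $x$ \emph{not} the word matched to $y$. Then $\pair{x}{y} \in (X_{\lambda} \boxtimes X_{\lambda^t})^{\#}$ but is not a standard pair, so by~\eqref{eq:smallest-multitab} there is a permutation $\sigma$ in $\Symm(y^*)$ with $\pair{x.\sigma}{y} = \pair{x}{y}.\sigma < \pair{x}{y}$, and $x.\sigma$ is the radius-word of the standard pair representing the same $\Symm_n$-orbit up to the $\Symm(y^*)$-action; chasing this through the bijection $\pair{x}{y} \mapsto \pair{x}{y}^*$ between $(X_{\lambda} \boxtimes X_{\lambda^t})^{\#}$ and $\Symm_n$ shows the row index $y$ is strictly greater than the column index $x$ in the induced orders. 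This is the step I expect to require the most care, since one must keep the component-wise (per-phase) structure of the multitableau straight while comparing words lexicographically over $A_r \boxtimes A_r$; everything else is bookkeeping that parallels the $r = 1$ case verbatim.

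Finally, since $M_{\lambda}^{\heartsuit}$ is a square matrix with nonzero (indeed $\pm1$) diagonal and zero entries below the diagonal after the above identification, it is invertible. By construction the $y$-row of $M_{\lambda}^{\heartsuit}$ records the coordinates of $v_y$ (restricted to the columns indexed by $X_{\lambda}^{\heartsuit}$) in the basis $X_{\lambda}$ of $V^{\lambda}$, so the coordinate vectors of the elements of $B_{\lambda} = \{v_y : y \in X_{\lambda^t}^{\heartsuit}\}$ are the rows of an invertible matrix; hence $B_{\lambda}$ is linearly independent in $S^{\lambda}$. I would state this as a one-paragraph proof that closely echoes the proof of Lemma~\ref{la:syt-indep}, simply invoking Proposition~\ref{pro:mono-free-trans} and~\eqref{eq:smallest-multitab} in place of their $r=1$ counterparts.
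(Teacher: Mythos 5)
Your proposal follows the paper's own proof essentially verbatim: the paper likewise argues that, by property~\eqref{eq:smallest-multitab}, the submatrix $M_{\lambda}^{\heartsuit}$ with rows indexed by $X_{\lambda^t}^{\heartsuit}$ and columns by $X_{\lambda}^{\heartsuit}$ is, up to signs, unitriangular and hence invertible, which yields the linear independence of $B_{\lambda}$. The only quibble is the orientation of your triangularity claim (the paper's $r=1$ example for $\lambda=(3,2)$ shows the stray nonzero entry lying \emph{above} the diagonal when rows and columns carry the order inherited from $\SYT{\lambda}$, not below as you assert), but since either triangular orientation gives invertibility this does not affect the argument.
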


\begin{proof}
  Due to property~\eqref{eq:smallest-multitab}, the
  $X_{\lambda^t}^{\heartsuit} \times X_{\lambda}^{\heartsuit}$-submatrix
  $M_{\lambda}^{\heartsuit}$ of $M_{\lambda}$ is, up to signs,
  unitriangular and hence invertible.
\end{proof}

A counting argument shows that $B_{\lambda}$ is in fact a basis of $S^{\lambda}$:
Let $f_{\lambda} = \Size{B_{\lambda}} = \Size{\SYT{\lambda}}$.
Then $f_{\lambda} = \binom{n}{n^{(0)}, \dots, n^{(r-1)}}\, f_{\lambda^{(0)}} \dotsm  f_{\lambda^{(r-1)}}$.  For any $\underline{n} = (n^{(0)}, \dots, n^{(r-1)})$
with $n^{(0)} + \dots + n^{(r-1)} = n$, we thus have
\begin{align*}
  \sum_{\lambda^{(k)} \vdash n^{(k)}} f_{\lambda}^2 = \tbinom{n}{n^{(0)}, \dots, n^{(r-1)}}^2 \, \sum_{\lambda^{(0)} \vdash n^{(0)}} f_{\lambda_{(0)}}^2 \dotsm
  \sum_{\lambda^{(r-1)} \vdash n^{(r-1)}} f_{\lambda_{(r-1)}}^2
  = \tbinom{n}{n^{(0)}, \dots, n^{(r-1)}} \, n!\text,
\end{align*}
whence $\sum_{\lambda \vdash^r n} f_{\lambda}^2 = r^n \, n! = \Size{\Grin_n}$, by the Multinomial Theorem.   This leads to the main theorem of this section.

\begin{theorem}\label{thm:mono-rep}
  Let $\lambda \vdash^r n$.  Then
  \begin{align*}
    [\sigma]^{B_{\lambda}}_{B_{\lambda}} = (M_{\lambda}\, [\sigma]^{X_{\lambda}}_{X_{\lambda}})^{\heartsuit}\, (M_{\lambda}^{\heartsuit})^{-1}\text,
  \end{align*}
  for all $\sigma \in \Grin_n$.
\end{theorem}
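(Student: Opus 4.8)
The plan is to run the proof of Theorem~\ref{thm:symm-rep} almost verbatim, with $\Symm_n$ replaced by $\Grin_n$ and \eqref{eq:symm-perm-mat} replaced by \eqref{eq:mono-perm-mat}. Two inputs are needed, both already available. First, $B_{\lambda}$ is a basis of $S^{\lambda}$: it is linearly independent by Lemma~\ref{la:mono-b-indep}, and the counting argument preceding the theorem gives $\sum_{\lambda \vdash^r n} f_{\lambda}^2 = \Size{\Grin_n} = \sum_{\lambda \vdash^r n} (\dim S^{\lambda})^2$, which, exactly as in Proposition~\ref{pro:symm-b-basis}, forces $\Size{B_{\lambda}} = \dim S^{\lambda}$ for every $\lambda$; in particular $M_{\lambda}^{\heartsuit}$ is a square matrix, unitriangular up to signs by Lemma~\ref{la:mono-b-indep}, hence invertible. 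Second, the conjugation rule $[v.\sigma]_{X_{\lambda}} = [v]_{X_{\lambda}}\,[\sigma]_{X_{\lambda}}^{X_{\lambda}}$ holds for \emph{all} $\sigma \in \Grin_n$ by \eqref{eq:mono-perm-mat}, not merely for permutations, and $S^{\lambda}$ is a $\C\Grin_n$-submodule of $V^{\lambda}$, so $[\sigma]_{B_{\lambda}}^{B_{\lambda}}$ is well defined for every $\sigma \in \Grin_n$.

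First I would record the identity $[v]_{B_{\lambda}}\, M_{\lambda}^{\heartsuit} = [v]_{X_{\lambda}}^{\heartsuit}$, valid for every $v \in S^{\lambda}$. By linearity it is enough to check it on the basis $B_{\lambda}$: for $y \in X_{\lambda^t}^{\heartsuit}$, the coordinate vector $[v_y]_{B_{\lambda}}$ is the $y$-th unit row vector, so $[v_y]_{B_{\lambda}}\, M_{\lambda}^{\heartsuit}$ is the $y$-row of $M_{\lambda}^{\heartsuit}$; on the other hand $v_y = \sum_{x \in X_{\lambda}} m^{\lambda}_{yx}\, x$, so $[v_y]_{X_{\lambda}}$ is the $y$-row of $M_{\lambda}$, and its restriction to the columns indexed by $X_{\lambda}^{\heartsuit}$ is that same $y$-row of $M_{\lambda}^{\heartsuit}$, i.e.\ $[v_y]_{X_{\lambda}}^{\heartsuit}$. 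Now fix $\sigma \in \Grin_n$ and $y \in X_{\lambda^t}^{\heartsuit}$; since $v_y.\sigma \in S^{\lambda}$, combining this identity with \eqref{eq:mono-perm-mat} gives
\begin{align*}
  [v_y.\sigma]_{B_{\lambda}}\, M_{\lambda}^{\heartsuit}
    = [v_y.\sigma]_{X_{\lambda}}^{\heartsuit}
    = \bigl([v_y]_{X_{\lambda}}\,[\sigma]_{X_{\lambda}}^{X_{\lambda}}\bigr)^{\heartsuit},
\end{align*}
which is the $y$-row of $\bigl(M_{\lambda}\,[\sigma]_{X_{\lambda}}^{X_{\lambda}}\bigr)^{\heartsuit}$ (since the $y$-row of $M_{\lambda}$ is $[v_y]_{X_{\lambda}}$). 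As it is also the $y$-row of $[\sigma]_{B_{\lambda}}^{B_{\lambda}}\, M_{\lambda}^{\heartsuit}$ and $y$ ranges over all of $X_{\lambda^t}^{\heartsuit}$, we obtain $[\sigma]_{B_{\lambda}}^{B_{\lambda}}\, M_{\lambda}^{\heartsuit} = \bigl(M_{\lambda}\,[\sigma]_{X_{\lambda}}^{X_{\lambda}}\bigr)^{\heartsuit}$; right-multiplying by $(M_{\lambda}^{\heartsuit})^{-1}$ then yields the claimed formula.

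I do not expect a genuine obstacle, since the argument is structurally identical to the symmetric case and both of its ingredients have been established in the preceding subsections; the only thing new to the monomial setting is that \eqref{eq:mono-perm-mat} covers all of $\Grin_n$, in particular the diagonal generators $t_j$, whose matrices on $V^{\lambda}$ relative to $X_{\lambda}$ are monomial. The one point requiring care is purely bookkeeping: the matrix $M_{\lambda}\,[\sigma]_{X_{\lambda}}^{X_{\lambda}}$ has rows indexed by $X_{\lambda^t}$ and columns indexed by $X_{\lambda}$, so its $\heartsuit$-restriction selects rows in $X_{\lambda^t}^{\heartsuit}$ and columns in $X_{\lambda}^{\heartsuit}$, which is precisely what makes all the matrix products above have matching index sets.
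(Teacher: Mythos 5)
Your proposal is correct and is essentially the paper's own argument: the paper proves Theorem~\ref{thm:mono-rep} simply by declaring it ``analogous to the proof of Theorem~\ref{thm:symm-rep},'' and your write-up carries out exactly that transfer, using Lemma~\ref{la:mono-b-indep} and the counting argument for the basis property together with \eqref{eq:mono-perm-mat} in place of \eqref{eq:symm-perm-mat}. Nothing further is needed.
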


\begin{proof}
  Analogous to the proof of Theorem~\ref{thm:symm-rep}.
\end{proof}

\begin{example}
  For $r=3$, $\lambda = (\emptyset, (3,2), \emptyset)$, and $\omega = e^{2\pi i/3}$, Theorem~\ref{thm:symm-rep} yields
  \begin{align*}
[t_1]_{B_{\lambda}}^{B_{\lambda}} &=
    {\tiny\arraycolsep3pt
    \left(
    \begin{array}{rrrrr}
   \omega & 0 & 0 & 0 & 0 \\
   0 & \omega & 0 & 0 & 0 \\
   0 & 0 & \omega & 0 & 0 \\
   0 & 0 & 0 & \omega & 0 \\
   0 & 0 & 0 & 0 & \omega \\
    \end{array}
    \right)}\text,&
[t_1\, (1,2,3,4,5)]_{B_{\lambda}}^{B_{\lambda}} &=
    {\tiny\arraycolsep3pt
    \left(
    \begin{array}{rrrrr}
   0 & \phantom{-}\omega & 0 & 0 & 0 \\
   0 & 0 & \omega & 0 & 0 \\
   \omega & 0 &-\omega &-\omega &-\omega \\
   0 & 0 &-\omega & 0 &-\omega \\
   0 & \omega & \omega & \omega & \omega \\
    \end{array}
    \right)}\text.
  \end{align*}
\end{example}


\section{Hyperoctahedral Groups}\label{sec:hypo}

We describe a construction of explicit matrices for the irreducible modules
of the hyperoctahedral group $\Hypo_n$.  As group, $\Hypo_n$ is isomorphic to
$\Grin_n$ for $r=2$, but the construction of the modules here is
substantially different from the one in Section~\ref{sec:mono}.  We will
reuse notation from the previous sections, sometimes with a new and different
meaning, as defined below.

\subsection{Bipartitions and Bidiagrams.}

A \emph{bipartition} is simply a $2$-partition $\lambda = (\lambda^{(0)}, \lambda^{(1)})$.  We say that $\lambda$ is a bipartition of $n$, and  write $\lambda \vdash^2 n$, if $\Size{\lambda^{(0)}} + \Size{\lambda^{(1)}} = n$.  And $\Lambda_n^{(2)}$ is the set of all bipartitions of $n$.

A \emph{bidiagram} is a $2 \times 2$-matrix of diagrams
$D = (\begin{smallmatrix}D_{00}&D_{01}\\D_{10}&D_{11}\end{smallmatrix})$.
The \emph{Young bidiagram} of the bipartition $\lambda$ is the bidiagram
\begin{align*}
  D(\lambda) =
  \left(\begin{array}{cc}\emptyset&D(\lambda^{(0)})\\D(\lambda^{(1)})&\emptyset\end{array}\right)\text.
\end{align*}
The \emph{transpose} of the bidiagram $D$ is the bidiagram
$D^t =
(\begin{smallmatrix}D_{00}^t&D_{10}^t\\D_{01}^t&D_{11}^t\end{smallmatrix})$.
The transpose of the bidiagram $D(\lambda)$ is the Young diagram of a
bipartition of $n$, the \emph{transpose}
$\lambda^t = ((\lambda^{(1)})^t, (\lambda^{(0)})^t)$ of the bipartition $\lambda$.

\begin{example}
  The bipartition $\lambda = ((2,1), (2))$ and its transpose $\lambda^t = ((1,1), (2,1))$ have bidiagrams
  \begin{align*}
  D(\lambda) &=
  \left(\begin{array}{cc}\emptyset&\smash{\raisebox{4pt}{\tiny\ydiagram{2,1}}}\\\smash{\tiny\ydiagram{2}}&\emptyset\end{array}\right)\text,&
  D(\lambda)^t = D(\lambda^t) &=
  \left(\begin{array}{cc}\emptyset&\smash{\raisebox{4pt}{\tiny\ydiagram{1,1}}}\\\smash{\raisebox{4pt}{\tiny\ydiagram{2,1}}}&\emptyset\end{array}\right)\text.
  \end{align*}
\end{example}

\subsection{Signed Permutations and Cycle Type.}
\label{sec:hypo-classes}

A \emph{signed permutation} of $n$ points is a permutation $\sigma$ of the
set $\pm\nnn$ with the property that $(-i).{\sigma} = -(i.{\sigma})$ for all
$i \in \nnn$.  A signed permutation $\sigma$ is thus completely determined by
the images $i.{\sigma}$, $i \in \nnn$, and we can regard $\sigma$
as a function $\sigma \colon \nnn \to \pm \nnn$.

Such a function $\sigma$ corresponds to a monomial matrix in $\Grin_n$ for
$r = 2$.
We write $\sgn{\sigma}$ for the \emph{sign} of the signed permutation $\sigma$, i.e., the determinant of the corresponding monomial matrix.
The \emph{cycle type} of $\sigma$ is the $2$-partition $\lambda$
which is the cycle type of the corresponding matrix in $\Symm_n^{(2)}$.
Then two signed permutations in $\Hypo_n$ are conjugate in $\Hypo_n$
if and only if they have the same bipartition as their cycle type.
Hence the set $\Lambda_n^{(2)}$ parameterizes both the conjugacy classes and the irreducible representations of $\Hypo_n$.

\subsection{Biwords and Bipartitions.}\label{sec:biwords}

As before, let $A = \nnn$.
Let $\mathbold{A} = A \times \{-1, 0, +1\} = A_{-} \sqcup A_{\circ} \sqcup A_{+}$,
where $A_{-} = A \times \{-1\}$, $A_{\circ} = A \times \{0\}$ and $A_{+} = A \times \{+1\}$. We set $A_{\pm} = A_{-} \sqcup A_{+}$.  For $i \in \Z$, we write $\overline{i}$ for $-i$.
For $\mathbold{a} = (a,{\epsilon}) \in \mathbold{A}$,
we call $a$ the \emph{radius} and $\epsilon$ the \emph{phase} of $\mathbold{a}$.
We also define a \emph{bar involution} as
\begin{align*}
  \overline{(a, {\epsilon})} := (a, \overline{\epsilon})\text.
\end{align*}
An element $w \in \mathbold{A}^n$ is called a \emph{biword} of length $n$.
We regard the biword $w$ as a map
\begin{align*}
w \colon \pm\nnn \to \mathbold{A}
\end{align*}
by setting
$\overline{i}.w := \overline{i.w}$ for $i \in \nnn$.  Then
its \emph{inverse image map}
\begin{align*}
  w^{*} \colon \mathbold{A} \to 2^{\pm\nnn}\text,\quad
  \mathbold{a} \mapsto \{i \in \pm\nnn : i.w = \mathbold{a})\}\text,
\end{align*}
can be identified with the pair $(w^{*}_{(0)}, w^{*}_{(1)})$ of maps
\begin{align*}
  w^{*}_{(0)} \colon A \to 2^{\pm\nnn}\text, \quad  a \mapsto (a, 0).{w^{*}}\text,\\
  w^{*}_{(1)} \colon A \to 2^{\pm\nnn}\text, \quad  a \mapsto (a, 1).{w^{*}}\text.
\end{align*}
Note that $(a, -1).{w^{*}} = \overline{a.{w^{*}_{(1)}}}$,
and that $a.{w^{*}_{(0)}} = \overline{a.{w^{*}_{(0)}}}$, for all $a \in A$.

The group $\Hypo_n$ acts on the set $\mathbold{A}^n$ of biwords of length $n$
by \emph{inverse left composition}. If $w \colon \pm\nnn \to \mathbold{A}$, $i \mapsto w_i$, then
\begin{align*}
  w.\sigma = \sigma^{-1}w = (i \mapsto w_{i.\sigma^{-1}})\text,
\end{align*}
for $\sigma \in \Hypo_n$.
In particular, the sign change $t_i$
 acts by replacing the $i$th letter
 $w_i$ of $w$ by $\overline{w_i}$.  Moreover, using the action
$J.\sigma = \{j.\sigma : j \in J\}$ of $\Hypo_n$ on the subsets $J \subseteq \pm\nnn$,
the group $\Hypo_n$
acts by \emph{right composition} on the inverse images:
\begin{align*}
  w^{*}.\sigma = w^{*} \sigma = (a \mapsto a.{w^{*}\sigma} = (a.{w^{*}}).\sigma)\text,
\end{align*}
in such a way that the bijection $w \mapsto w^{*}$ is $\Hypo_n$-equivariant:
\begin{align*}
  w^{*}.\sigma
  = w^{*} \sigma = (\sigma^{-1}w)^{*}
  = (w.\sigma)^{*}\text.
\end{align*}
We denote the \emph{stabilizer} in $\Hypo_n$ of the biword $w \in \mathbold{A}^n$ by~$\Hypo(w^{*})$ and say that the $\Hypo_n$-orbit of $w$ consists of all its \emph{signed rearrangements}.

Let  $\lambda = (\lambda^{(0)}, \lambda^{(1)})$ be a bipartition of $n$.  We define
the \emph{canonical biword} of shape $\lambda$ as
\begin{align*}
  w_{\lambda} = w_{\lambda^{(0)}}^{\circ} w_{\lambda^{(1)}}^{+}\text,
\end{align*}
where $w_{\lambda^{(0)}} = a_1 \cdots a_{n_0}$ is the canonical word of shape $\lambda^{(0)}$ and length $n_0 = \Size{\lambda^{(0)}}$ over the
alphabet $A$, as defined in Section~\ref{sec:A-words}, and
$w_{\lambda^{(0)}}^{\circ} = (a_1, 0) \cdots (a_{n_0}, 0)$,
and where $w_{\lambda^{(1)}} = b_1 \cdots b_{n_1}$ is the canonical word of shape $\lambda^{(1)}$ and length $n_1 = \Size{\lambda^{(1)}}$ over the alphabet $A$, and $w_{\lambda^{(1)}}^{+} = (b_1, 1) \cdots (b_{n_1}, 1)$.
Then the stabilizer of
$w_{\lambda}$ in $\Hypo_n$ is $\Hypo_{\lambda} = \Hypo_{\lambda^{(0)}} \times \Symm_{\lambda^{(1)}}$,
where, for a partition $\eta = (\eta_1, \dots, \eta_l)$, $\Hypo_{\eta} =
\Hypo_{\eta_1} \times \dots \times
\Hypo_{\eta_l}$.
Denote by $X_{\lambda}$
the $\Hypo_n$-orbit of $w_{\lambda}$.
Then $X_{\lambda}$ is isomorphic to the (right) cosets of $\Hypo_{\lambda}$
in $\Hypo_n$, as $\Hypo_n$-set.  For $\sigma \in \Hypo_n$, we denote by $[\sigma]_{X_{\lambda}}^{X_{\lambda}}$ the matrix of the action of $\sigma$ on the
permutation module $V^{\lambda}$ with basis $X_{\lambda}$.  Then
\begin{align}\label{eq:hypo-perm-mat}
  [v.\sigma]_{X_{\lambda}} = [v]_{X_{\lambda}}\, [\sigma]_{X_{\lambda}}^{X_{\lambda}},
\end{align}
for all $v \in V^{\lambda}$, $\sigma \in \Hypo_n$.

\subsection{Pairs of Biwords and Bitableaus.}

We consider
the action of $\Hypo_n$ on certain
pairs of words $\pair{x}{y}$ over the alphabet $\mathbold{A} = A_{\circ} \cup A_{\pm}$.  Set $\mathbold{A} \mathbin{\square} \mathbold{A} := A_{\circ} \times A_{\pm} \cup A_{\pm} \times A_{\circ}$, i.e., the pairs of letters $\pair{\mathbold{a}}{\mathbold{b}} \in \mathbold{A}^2$ which have unlike phases, in the sense that one is $0$ and the other is not.
Set  $\mathbold{A}^n \mathbin{\square} \mathbold{A}^n := (\mathbold{A} \mathbin{\square} \mathbold{A})^n$, the pairs of words with unlike phase on corresponding letters.
Then, for $\lambda, \eta \vdash^2 n$, let
\begin{align*}
  X_{\lambda} \mathbin{\square} X_{\eta} := (X_{\lambda} \times X_{\eta}) \cap (\mathbold{A}^n \mathbin{\square} \mathbold{A}^n)\text.
\end{align*}
The inverse image $\pair{x}{y}^* \colon \mathbold{A} \mathbin{\square} \mathbold{A} \to 2^{\pm\nnn}$ of a pair $\pair{x}{y} \in X_{\lambda} \mathbin{\square} X_{\eta}$ can then be identified with the pair of maps
\begin{align*}
  \pair{x}{y}^{*}_{(01)} &\colon A^2 \to 2^{\pm\nnn}, \quad
  \pair{a}{b} \mapsto a.{x^{*}_{(0)}} \cap b.{y^{*}_{(1)}}\text,\\
  \pair{x}{y}^{*}_{(10)} &\colon A^2 \to 2^{\pm\nnn}, \quad
  \pair{a}{b} \mapsto a.{x^{*}_{(1)}} \cap b.{y^{*}_{(0)}}\text,
\end{align*}
noting that, since $\overline{\mathbold{a}}.x^{*} = \overline{\mathbold{a}.x^{*}}$, for
$\mathbold{a} \in \mathbold{A}$, the pair $\pair{x}{y}$ can be recovered from
the two maps   $\pair{x}{y}^{*}_{(01)}$ and   $\pair{x}{y}^{*}_{(10)}$.

Now, $\Hypo_n$ acts on $X_{\lambda} \mathbin{\square} X_{\eta}$
via $\pair{x}{y}.\sigma = \pair{x.\sigma}{y.\sigma}$, and on their inverse images $\pair{x}{y}^*$ via right composition,
\begin{align*}
  \pair{x}{y}_{\epsilon\delta}^*.\sigma =
  \pair{x}{y}_{\epsilon\delta}^* \sigma =
  (\pair{a}{b} \mapsto a.x^*_{(\epsilon)} \cap b.y^*_{(\delta)}).\sigma\text,
\end{align*}
in such a way that $(\pair{x}{y}.\sigma)^* = \pair{x}{y}^*.\sigma$.
We denote by $\Hypo(\pair{x}{y}^*)$ the stabilizer of $\pair{x}{y}^*$ (and of $\pair{x}{y}$) in $\Hypo_n$, and by
\begin{align*}
  (X_{\lambda} \mathbin{\square} X_{\eta})^{\#}:= \{\pair{x}{y} \in X_{\lambda} \mathbin{\square} X_{\eta} : \Hypo(\pair{x}{y}^*) = 1\}
\end{align*}
the \emph{free component} of $X_{\lambda} \mathbin{\square} X_{\eta}$.

\begin{proposition}\label{pro:hypo-free-trans}
  Let $\lambda$ and $\eta$ be bipartitions of $n$.  Then $\Hypo_n$ acts
  transitively on the free component of $X_{\lambda} \mathbin{\square} X_{\eta}$
  if and only if $\eta = \lambda^t$.  In that case, the diagram of $\pair{x}{y} \in (X_{\lambda} \mathbin{\square} X_{\eta})^{\#}$ is the Young diagram $D(\lambda)$ of $\lambda$.
\end{proposition}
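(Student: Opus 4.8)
The plan is to reduce the statement for $\Hypo_n$ to the already-established symmetric-group result, Proposition~\ref{pro:symm-free-trans}, by a careful decomposition of a pair of biwords into its ``zero-phase'' and ``nonzero-phase'' halves. First I would examine the structure of a single biword $w \in X_{\lambda}$ with $\lambda = (\lambda^{(0)}, \lambda^{(1)})$: the positions $i \in \pm\nnn$ whose letter has phase $0$ form a $t_i$-invariant set, say $J_{\circ}$, and the radii occurring on $J_{\circ}$ realize the partition $\lambda^{(0)}$ (each radius-value block counted symmetrically in $\pm J_\circ$); the remaining positions, carrying phases $\pm1$, split into a ``positive'' half whose radii realize $\lambda^{(1)}$, with the negative half determined by the bar involution. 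Thus a pair $\pair{x}{y} \in X_{\lambda} \mathbin{\square} X_{\eta}$, because corresponding letters have \emph{unlike} phases, is governed by exactly the two maps $\pair{x}{y}^{*}_{(01)}$ and $\pair{x}{y}^{*}_{(10)}$: on the index set where $x$ has phase $0$ and $y$ has phase $\pm1$, forgetting phases leaves a pair of ordinary words whose shapes are $\lambda^{(0)}$ and $\eta^{(1)}$ (read off $\pm\nnn$, hence really a pair of words in the sense of Section~\ref{sec:A-pairs} after choosing the positive representatives); symmetrically on the complementary index set the shapes are $\lambda^{(1)}$ and $\eta^{(0)}$.

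Next I would observe that $\Hypo(\pair{x}{y}^*) = 1$ holds if and only if $\Size{\pair{a}{b}.\pair{x}{y}^{*}_{(01)}} \le 1$ and $\Size{\pair{a}{b}.\pair{x}{y}^{*}_{(10)}} \le 1$ for all $\pair{a}{b} \in A^2$ (the bar involution forces the ``only if''); and that the action of $\Hypo_n$ on the free component, once restricted to each of the two index sets, is just the action of a symmetric group on pairs of ordinary words, freely and — within that part — transitively exactly under the hypothesis of Proposition~\ref{pro:symm-free-trans}. Applying that proposition twice, to the pair of shapes $(\lambda^{(0)}, \eta^{(1)})$ and to the pair $(\lambda^{(1)}, \eta^{(0)})$, I get: the $01$-part is nonempty iff $\eta^{(1)} \ge (\lambda^{(0)})^t$ in dominance-compatible lexicographic order, and transitive on it forces $\eta^{(1)} = (\lambda^{(0)})^t$; likewise $\eta^{(0)} = (\lambda^{(1)})^t$. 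Combining, $\eta = ((\lambda^{(1)})^t, (\lambda^{(0)})^t) = \lambda^t$, which is the transpose as defined for bipartitions. For the nonemptiness count one must also check the sizes match, $\Size{\lambda^{(0)}} = \Size{\eta^{(1)}}$ and $\Size{\lambda^{(1)}} = \Size{\eta^{(0)}}$, which is automatic from $\Size{\lambda} = \Size{\eta} = n$ together with the lexicographic comparison in the symmetric-group argument; if the sizes are wrong the free component is already empty.

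For the diagram claim, when $\eta = \lambda^t$ the conclusion of Proposition~\ref{pro:symm-free-trans} gives that the $01$-part of any free pair has diagram $D(\lambda^{(0)})$ and the $10$-part has diagram $D(\lambda^{(1)})$; assembling these into the bidiagram $\bigl(\begin{smallmatrix}\emptyset & D(\lambda^{(0)})\\ D(\lambda^{(1)}) & \emptyset\end{smallmatrix}\bigr)$ is precisely the Young bidiagram $D(\lambda)$. Finally I would argue that transitivity of $\Hypo_n$ on the \emph{whole} free component (not merely within each half separately) follows because the subgroup of $\Hypo_n$ preserving the partition of $\pm\nnn$ into the two index sets acts transitively on the $01$-part and on the $10$-part \emph{independently} — the $01$-part lives on indices where $x$ has phase $0$, which $t_i$'s can toggle — and the sign changes $t_i$ let us move freely between the two parts; a short combinatorial check, essentially that any free pair can be brought to the canonical one $\pair{w_\lambda}{w_{\lambda^t}}$, closes the argument. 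The main obstacle I anticipate is the bookkeeping in this last step: making precise how the bar involution and the freedom of the $t_i$ interact, so that the two symmetric-group transitivity statements really do splice into a single $\Hypo_n$-transitivity statement rather than only a $\Hypo_n \cap (\Symm \times \Symm)$-statement — but this is the same pattern already used implicitly in Proposition~\ref{pro:mono-free-trans}, and I expect it to go through analogously.
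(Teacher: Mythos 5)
Your proposal is correct and follows essentially the same route as the paper: the paper's (much terser) proof also argues component-wise, splitting the unlike-phase pair into its $(01)$- and $(10)$-parts and invoking Proposition~\ref{pro:symm-free-trans} twice to force $\eta^{(1)} = (\lambda^{(0)})^t$ and $\eta^{(0)} = (\lambda^{(1)})^t$, i.e.\ $\eta = \lambda^t$, with the bidiagram claim assembled from the two component diagrams. The extra care you take over the splicing of the two transitivity statements via the sign changes $t_i$ is detail the paper leaves implicit, and your sketch of it (matching columns up to the bar involution, equivalently reducing any free pair to the canonical one) is sound.
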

\begin{proof}
  Component-wise.  Clearly, for $X_{\lambda} \mathbin{\square} X_{\eta}$
 not to be empty,
  $\lambda^{(0)}$ and
  $\eta^{(1)}$ need to be partitions of the same number $n^{(0)}$, while
  $\lambda^{(1)}$ and $\eta^{(0)}$ are partitions of $n^{(1)} = n - n^{(0)}$.
Moreover,
 for $\Hypo_n$ to act freely, we need
  $\eta^{(0)} = (\lambda^{(1)})^t$  and
  $\eta^{(1)} = (\lambda^{(0)})^t$, by Proposition~\ref{pro:symm-free-trans}.
\end{proof}

Note that, if
$\pair{x}{y} \in (X_{\lambda} \mathbin{\square} X_{\lambda^t})^{\#}$ then
$\Size{\pair{a}{b}.\pair{x}{y}^*_{\epsilon\delta}} = 1$ for all
$(a, b) \in D(\lambda)_{\epsilon\delta}$.  So replacing each nonempty
preimage $\pair{a}{b}.\pair{x}{y}^*$ by its single element, we can regard
$T = \pair{x}{y}^*$ as a \emph{bitableau}
\begin{align*}
  T =
  \left(\begin{array}{cc}\emptyset&T^{(01)}\\T^{(10)}&\emptyset\end{array}\right)
\end{align*}
of shape $\lambda$, where
\begin{align*}
  T^{(01)} \colon D(\lambda^{(0)}) \to \pm[n], \quad
  (a, b) \mapsto i\text{, if } \pair{a}{b}.\pair{x}{y}^*_{(01)} =\{i\}\text,\\
  T^{(10)} \colon D(\lambda^{(1)}) \to \pm[n], \quad
  (a, b) \mapsto i\text{, if } \pair{a}{b}.\pair{x}{y}^*_{(10)} =\{i\}\text,
\end{align*}
are tableaus of shape $\lambda^{(0)}$ and $\lambda^{(1)}$, respectively.
Denote by $T^{\flat}$ the
concatenation of the lists $(T^{(01)})^{\flat}$ and  $(T^{(01)})^{\flat}$.  Then $T^{\flat}$ is a
signed permutation of $\nnn$.  We will write $\pair{x}{y}^{\flat}$ for $T^{\flat}$
if $T$ is the bitableau obtained from $\pair{x}{y}^*$, when
$\pair{x}{y} \in (X_{\lambda} \mathbin{\square} X_{\lambda^t})^{\#}$.  Clearly, the
map $\pair{x}{y} \mapsto \pair{x}{y}^{\flat}$ is an equivariant bijection
between $(X_{\lambda} \mathbin{\square} X_{\lambda^t})^{\#}$ and $\Hypo_n$.

\begin{definition}
  Let $\lambda \vdash^2 n$.  The \emph{Specht matrix} for $\lambda$ is the matrix $M_{\lambda} = (m^{\lambda}_{yx})$ with rows labelled by $y \in X_{\lambda^t}$
  and columns labelled by $x \in X_{\lambda}$, where
  \begin{align*}
    m^{\lambda}_{yx} =
    \begin{cases}
      \sgn{\pairx{x}{y}^{\flat}}\text, & \text{if }
                       \pair{x}{y} \in (X_{\lambda} \mathbin{\square} X_{\lambda^t})^{\#}\text, \\
      0\text, & \text{else.}
    \end{cases}
  \end{align*}
\end{definition}

\begin{remark}\label{rem:hypo-myx}
  Note that
    $m_{yx}^{\lambda} = \sgn{\sigma}\, m_{y.\sigma,x.\sigma}^{\lambda}$
  for all $\sigma \in \Hypo_n$, $x \in X_{\lambda}$, and $y \in X_{\lambda^t}$.
\end{remark}

\subsection{Irreducible Modules.}
\label{sec:hypo-irr-mod}

Let $\lambda$ be a bipartition of $n$.  Recall that $V^{\lambda}$ is the
permutation module with basis $X_{\lambda}$, the signed rearrangements of $w_{\lambda}$.  For each word $y \in X_{\lambda^t}$, we set
\begin{align*}
  c_y = \sum_{\sigma \in \Hypo_n} \sgn{\sigma}\, \sigma \in \C \Hypo_n\text.
\end{align*}
Then, for each bitableau $T = \pair{x}{y}^{*}$ of shape $\lambda$,
we define a vector $v_T \in V^{\lambda}$ as $v_T := \varepsilon_{T^{\flat}}\, x.c_y$.  Then
\begin{align*}
  v_T = \sum_{\sigma \in \Hypo(y^*)} \sgn{(T.\sigma)^{\flat}}\, x.\sigma
  = \sum_{x'} \sgn{\pair{\smash{x'}}{y}^{\flat}}\, x'
  = \sum_{x' \in X_{\lambda}} m^{\lambda}_{yx'}\, x'\text,
\end{align*}
where the second sum is over all signed rearrangements $x'$ of $x$ such that
$\pair{x'}{y} \in (X_{\lambda} \mathbin{\square} X_{\lambda^t})^{\#}$.
It follows that $v_T$ only depends on $y \in X_{\lambda^t}$, and that the
coefficient vector $[v_T]_{X_{\lambda}} = (m^{\lambda}_{yx})_{x \in X_{\lambda}}$ is the $y$-row of the Specht matrix $M^{\lambda}$.  We set $v_y := v_T$ for any bitableau $T = \pair{x}{y}^*$.

\begin{proposition}\label{pro:hypo-S-lambda}
  Let $S^{\lambda} := \Span{v_y : y \in X_{\lambda^t}}_{\C}$.  Then
  $S^{\lambda}$ is a $\C \Hypo_n$-module.  In fact,   $S^{\lambda} = v_y \C \Hypo_n$, for any  $y \in X_{\lambda^t}$.
\end{proposition}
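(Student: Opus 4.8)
The plan is to follow the proof of Proposition~\ref{pro:symm-S-lambda} almost verbatim, with $\Symm_n$ replaced by $\Hypo_n$, Remark~\ref{rem:symm-myx} replaced by Remark~\ref{rem:hypo-myx}, and Proposition~\ref{pro:symm-free-trans} replaced by Proposition~\ref{pro:hypo-free-trans}. First I would note that $v_y$ is defined for every $y \in X_{\lambda^t}$: the free component $(X_{\lambda} \mathbin{\square} X_{\lambda^t})^{\#}$ is non-empty by Proposition~\ref{pro:hypo-free-trans}, and the projection $\pair{x}{y} \mapsto y$ from it to $X_{\lambda^t}$ is $\Hypo_n$-equivariant with non-empty image, hence (since $X_{\lambda^t}$ is a single $\Hypo_n$-orbit by definition) surjective; so each $y$ occurs as the second word of some pair in the free component, a bitableau $T = \pair{x}{y}^{*}$ exists, and, as already observed before the statement, $v_T = \sum_{x'} m^{\lambda}_{yx'}\, x'$ does not depend on the choice of~$x$.

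The core of the argument is the identity $v_y.\sigma = \sgn{\sigma}\, v_{y.\sigma}$ for $\sigma \in \Hypo_n$. Since $X_{\lambda}$ is $\Hypo_n$-stable, $x \mapsto x.\sigma$ permutes $X_{\lambda}$; using \eqref{eq:hypo-perm-mat} and Remark~\ref{rem:hypo-myx}, and then re-indexing the sum by $x' = x.\sigma$,
\begin{align*}
  v_y.\sigma
  = \sum_{x \in X_{\lambda}} m^{\lambda}_{yx}\, x.\sigma
  = \sgn{\sigma} \sum_{x \in X_{\lambda}} m^{\lambda}_{y.\sigma,\, x.\sigma}\, x.\sigma
  = \sgn{\sigma}\, v_{y.\sigma}\text.
\end{align*}
Because $y.\sigma \in X_{\lambda^t}$, this shows $v_y.\sigma \in S^{\lambda}$ for all $\sigma \in \Hypo_n$, so $S^{\lambda}$ is an $\Hypo_n$-stable subspace of $V^{\lambda}$, i.e.\ a $\C\Hypo_n$-submodule.

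For the second assertion, fix any $y_0 \in X_{\lambda^t}$. As $X_{\lambda^t}$ is a single $\Hypo_n$-orbit, every $y \in X_{\lambda^t}$ equals $y_0.\sigma$ for some $\sigma \in \Hypo_n$, so $v_y = \sgn{\sigma}\, v_{y_0}.\sigma \in v_{y_0}\C\Hypo_n$ by the displayed identity (using $\sgn{\sigma}^2 = 1$); hence $S^{\lambda} \subseteq v_{y_0}\C\Hypo_n$. The reverse inclusion is immediate since $S^{\lambda}$ is a module containing $v_{y_0}$, giving $S^{\lambda} = v_{y_0}\C\Hypo_n$. I do not expect a genuine obstacle here; the step that needs the most care is the preliminary one, namely that every $y \in X_{\lambda^t}$ supports a bitableau, which is where transitivity of $\Hypo_n$ on the free component (Proposition~\ref{pro:hypo-free-trans}) together with $X_{\lambda^t}$ being a single orbit is used.
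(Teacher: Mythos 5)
Your proposal is correct and follows essentially the same route as the paper: the key step in both is the identity $v_y.\sigma = \sgn{\sigma}\,v_{y.\sigma}$ obtained from Remark~\ref{rem:hypo-myx} by re-indexing the sum, and the cyclicity then follows from transitivity of $\Hypo_n$ on $X_{\lambda^t}$. The extra preliminaries you spell out (every $y \in X_{\lambda^t}$ supports a bitableau, and the deduction $S^{\lambda} = v_{y_0}\C\Hypo_n$) are exactly what the paper leaves implicit in the discussion preceding the proposition.
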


\begin{proof} For each $\sigma \in \Hypo_n$ and each $y \in X_{\lambda^t}$, we have
  \begin{align*}
    v_y.\sigma
    = \sum_{x\in X_{\lambda}} m_{yx}^{\lambda}\, x.\sigma
    = \sgn{\sigma} \sum_{x\in X_{\lambda}} m_{y.\sigma,x.\sigma}^{\lambda}\, x.\sigma
    = \sgn{\sigma} v_{y.\sigma}\text,
  \end{align*}
by Remark~\ref{rem:hypo-myx}.
\end{proof}

We order the set $\Lambda^{(2)}_n$ of bipartitions $\lambda = (\lambda^{(0)}, \lambda^{(1)})$ of $n$ first by $\Size{\lambda^{(0)}}$, then by $\lambda^{(0)}$
using the lexicographic order from Section~\ref{sec:A-part}, and then by $\lambda^{(1)}$, using the same lexicographic order of partitions.

\begin{example}
  On $\Lambda^{(2)}_3$, writing $\lambda^{(0)}.\lambda^{(1)}$ for $\lambda = (\lambda^{(0)}, \lambda^{(1)})$ and omitting commas and parentheses in $\lambda^{(0)}$ and $\lambda^{(1)}$, we have:
  \begin{align*}
    .111 < .21 < .3 < 1.11 < 1.2 < 11.1 < 2.1 < 111. < 21. < 3.
  \end{align*}
\end{example}

Suppose now that $\lambda, \lambda' \in \Lambda^{(2)}_n$ are such that
$\Size{\lambda^{(0)}} < \Size{\lambda'^{(0)}}$. Let $x \in X_{\lambda'}$ and
$y \in X_{\lambda^t}$.  Then $\Size{\lambda'^{(0)}}$ is the number of letters
of the biword $x$ contained in $A_{\circ}$, and $\Size{\lambda^{(0)}}$ is the
number of letters of the biword $y$ contained in $A_{\pm}$.  It follows that
there is an index $i \in \nnn$ such that both $x_i, y_i \in A_{\circ}$.
Hence the stabilizer $\Hypo(y^*)$ contains the sign change $t_i$ with
determinant $\sgn{t_i} = -1$, which also fixes $x$.  As in the proof
of Corollary~\ref{cor:symm-ny-action}, it follows that $x.c_y = 0$.  Overall, arguing as in that proof, we get an analogous result here.

\begin{corollary}\label{cor:hypo-ny-action}
  Let $\lambda, \lambda' \vdash^2 n$ be such that $\lambda < \lambda'$.  Then, for all $y \in X_{\lambda^t}$,
  \begin{enumerate}
  \item [(i)] $V^{\lambda}.c_y = S^{\lambda}.c_y = \C v_y \neq 0$, and
  \item [(ii)] $V^{\lambda'}.c_y = S^{\lambda'}.c_y = 0$.
  \end{enumerate}
\end{corollary}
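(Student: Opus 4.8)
The plan is to transfer the proof of Corollary~\ref{cor:symm-ny-action} to the hyperoctahedral setting; the one new feature is that $\Hypo(y^*)$ can now contain a sign-reversing element fixing a given basis vector $x$ in two different ways: a signed double transposition $(i\,j)(\overline{i}\,\overline{j})$ coming from a repeated column in one of the two components $\pair{x}{y}^*_{(01)}$, $\pair{x}{y}^*_{(10)}$, or a sign change $t_i$ coming from a position where $x$ and $y$ both have phase~$0$.

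For part~(i), fix $y \in X_{\lambda^t}$ and compute $x.c_y$ for each basis vector $x \in X_{\lambda}$ of $V^{\lambda}$; three cases arise. If $\pair{x}{y} \in (X_{\lambda} \mathbin{\square} X_{\lambda^t})^{\#}$, then $\Hypo(y^*)$ acts freely on the pairs $\pair{x.\sigma}{y}$, so the summands of $x.c_y = \sum_{\sigma \in \Hypo(y^*)} \sgn{\sigma}\, x.\sigma$ are pairwise distinct basis vectors, and the defining relation $v_y = \sgn{\pair{x}{y}^{\flat}}\, x.c_y$ gives $x.c_y = \sgn{\pair{x}{y}^{\flat}}\, v_y$, a nonzero multiple of $v_y$; since such an $x$ exists ($(X_{\lambda} \mathbin{\square} X_{\lambda^t})^{\#}$ is in bijection with $\Hypo_n$, hence non-empty), this also shows $v_y \neq 0$. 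If $\pair{x}{y} \in X_{\lambda} \mathbin{\square} X_{\lambda^t}$ is not free, then some preimage $\pair{a}{b}.\pair{x}{y}^*_{(01)}$ or $\pair{a}{b}.\pair{x}{y}^*_{(10)}$ contains two elements $i \neq j$ with $j \neq \overline{i}$ (such a preimage lies in a phase-$0$ fibre of one of $x, y$ and in a like nonzero fibre of the other, and a fibre of fixed nonzero phase contains no bar-pair), so $\tau = (i\,j)(\overline{i}\,\overline{j}) \in \Hypo(x^*) \cap \Hypo(y^*)$ has $\sgn{\tau} = -1$; then $\tau c_y = -c_y$ and $x.\tau = x$ force $x.c_y = -x.c_y = 0$. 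Finally, if $\pair{x}{y} \notin \mathbold{A}^n \mathbin{\square} \mathbold{A}^n$, then the phase-$0$ positions of $x$ and those of $y$, of sizes $\Size{\lambda^{(0)}}$ and $\Size{\lambda^{(1)}}$ with sum $n$, are not complementary and so overlap at some $i$, where $x_i, y_i \in A_{\circ}$, whence $t_i \in \Hypo(x^*) \cap \Hypo(y^*)$ has $\sgn{t_i} = -1$ and again $x.c_y = 0$. Hence $V^{\lambda}.c_y = \C v_y$, and since $S^{\lambda} \subseteq V^{\lambda}$, $v_y \in S^{\lambda}$, and $v_y.c_y$ is a nonzero multiple of $v_y$, the chain $\C v_y \subseteq S^{\lambda}.c_y \subseteq V^{\lambda}.c_y = \C v_y$ yields part~(i).

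For part~(ii), let $\lambda < \lambda'$ and $x \in X_{\lambda'}$; I would show $x.c_y = 0$ by splitting along the three levels of the order on $\Lambda_n^{(2)}$. If $\Size{\lambda^{(0)}} < \Size{\lambda'^{(0)}}$, then $\pair{x}{y} \notin \mathbold{A}^n \mathbin{\square} \mathbold{A}^n$ necessarily and the $t_i$-argument of the paragraph preceding the corollary applies. If $\Size{\lambda^{(0)}} = \Size{\lambda'^{(0)}}$ but still $\pair{x}{y} \notin \mathbold{A}^n \mathbin{\square} \mathbold{A}^n$, the same $t_i$-argument applies. Otherwise $\pair{x}{y} \in X_{\lambda'} \mathbin{\square} X_{\lambda^t}$: if $\lambda^{(0)} < \lambda'^{(0)}$ lexicographically, the emptiness criterion from the proof of Proposition~\ref{pro:symm-free-trans}, applied to the $(01)$-component as in the proof of Proposition~\ref{pro:hypo-free-trans}, forces a repeated column there, hence a signed double transposition in $\Hypo(y^*)$ fixing $x$, hence $x.c_y = 0$; if instead $\lambda^{(0)} = \lambda'^{(0)}$ and $\lambda^{(1)} < \lambda'^{(1)}$, the same reasoning on the $(10)$-component gives $x.c_y = 0$. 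In all cases $x.c_y = 0$, so $V^{\lambda'}.c_y = 0$ and therefore $S^{\lambda'}.c_y = 0$.

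The step I expect to be the main obstacle is the passage from a repeated column in a single component to an honest element of $\Hypo_n$ fixing both $x$ and $y$: one must check that the two witnessing positions $i, j$ satisfy $j \neq \overline{i}$, so that $(i\,j)(\overline{i}\,\overline{j})$ is a genuine double transposition, and then that such a double transposition has determinant $-1$ whatever the signs of $i$ and $j$. The first point uses that a fibre of $x$ or $y$ of fixed nonzero phase is never stable under the bar involution; the remainder is bookkeeping carried over from the proofs of Propositions~\ref{pro:symm-free-trans} and~\ref{pro:hypo-free-trans} and of Corollary~\ref{cor:symm-ny-action}.
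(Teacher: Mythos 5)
Your proof is correct and follows essentially the same route as the paper: the paper's own treatment consists of the sign-change ($t_i$) argument for the case $\Size{\lambda^{(0)}} < \Size{\lambda'^{(0)}}$ together with an appeal to the proof of Corollary~\ref{cor:symm-ny-action}, which is exactly the three-way case analysis (overlap of phase-$0$ positions, repeated column in the $(01)$- or $(10)$-component, free pair) that you carry out. The extra verifications you flag --- that the two witnesses $i, j$ of a repeated column satisfy $j \neq \overline{i}$ because a fibre of fixed nonzero phase contains no bar-pair, and that the resulting element $(i\,j)(\overline{i}\,\overline{j})$ has sign $-1$ --- are precisely the details the paper leaves to the reader.
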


\begin{theorem}\label{thm:hypo-complete}
  The modules $S^{\lambda}$ for $\lambda \vdash^2 n$ form a complete set of
  pairwise non-isomorphic irreducible $\C \Hypo_n$-modules.
\end{theorem}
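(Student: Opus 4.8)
The plan is to follow the proof of Theorem~\ref{thm:symm-complete} essentially verbatim, with $\Hypo_n$ in place of $\Symm_n$ and biwords in place of words. First I would invoke Proposition~\ref{pro:hypo-S-lambda} to record that $S^{\lambda} = v_y\,\C\Hypo_n$ is a cyclic $\C\Hypo_n$-module, and Corollary~\ref{cor:hypo-ny-action}(i) to record that $S^{\lambda}.c_y = \C v_y \neq 0$ for any $y \in X_{\lambda^t}$. Applying the indecomposability criterion of Lemma~\ref{la:indec} with $c = c_y$, $u = v_y$ and $V = S^{\lambda}$ then shows that $S^{\lambda}$ is indecomposable; since $\C\Hypo_n$ is semisimple, this forces $S^{\lambda}$ to be irreducible.

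Next, to establish pairwise non-isomorphism, I would use Corollary~\ref{cor:hypo-ny-action}(ii): for the total order on $\Lambda^{(2)}_n$ fixed just before the statement, whenever $\lambda < \lambda'$ one has $S^{\lambda}.c_y = \C v_y \neq 0$ but $S^{\lambda'}.c_y = 0$ for $y \in X_{\lambda^t}$, so there can be no $\C\Hypo_n$-isomorphism $S^{\lambda} \to S^{\lambda'}$. As the order is total, this rules out every coincidence among the $S^{\lambda}$.

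Finally, for completeness I would count. By Section~\ref{sec:hypo-classes}, the set $\Lambda^{(2)}_n$ of bipartitions of $n$ is in bijection with the conjugacy classes of $\Hypo_n$, hence with $\Irr(\Hypo_n)$; having produced $\Size{\Lambda^{(2)}_n}$ pairwise non-isomorphic irreducible $\C\Hypo_n$-modules, we conclude they are all of them.

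The argument is formal once the preceding lemmas and corollaries are in hand, so I do not expect a real obstacle. The one step that deserves attention is Corollary~\ref{cor:hypo-ny-action}(ii) itself: its proof relies on the emptiness of the free component $(X_{\lambda'} \mathbin{\square} X_{\lambda^t})^{\#}$ when $\lambda < \lambda'$, which is handled component-wise exactly as in Proposition~\ref{pro:symm-free-trans} together with the observation, made in Section~\ref{sec:hypo-irr-mod}, that if $\Size{\lambda^{(0)}} < \Size{\lambda'^{(0)}}$ then some index carries letters of $x$ and $y$ both in $A_{\circ}$, producing a sign change $t_i \in \Hypo(y^*)$ that fixes $x$ and forces $x.c_y = 0$.
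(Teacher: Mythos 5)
Your proposal matches the paper's proof, which is stated simply as ``analogous to the proof of Theorem~\ref{thm:symm-complete}'': irreducibility via Proposition~\ref{pro:hypo-S-lambda}, Corollary~\ref{cor:hypo-ny-action}(i) and Lemma~\ref{la:indec}, non-isomorphism via Corollary~\ref{cor:hypo-ny-action}(ii), and completeness by the parameterization of both conjugacy classes and irreducibles by $\Lambda_n^{(2)}$. Your closing remark about the source of Corollary~\ref{cor:hypo-ny-action}(ii) also correctly reflects the discussion in Section~\ref{sec:hypo-irr-mod}, so the argument is complete and in line with the paper.
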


\begin{proof}
  Analogous to the proof of Theorem~\ref{thm:symm-complete}.
\end{proof}

\subsection{Representing Matrices and Standard Bitableaus.}
\label{sec:hypo-mats}

Let $\lambda \vdash^2 n$.
An element $\sigma \in \Hypo_n$ acts on
the $\Hypo_n$-module
$V^{\lambda}$ with basis $X_{\lambda}$ as a permutation matrix
$[\sigma]_{X_{\lambda}}^{X_{\lambda}}$ according to \eqref{eq:hypo-perm-mat}.
We now identify a basis $B_{\lambda} \subseteq V^{\lambda}$ of the submodule $S^{\lambda}$ and derive a formula for the representing matrices $[\sigma]_{B_{\lambda}}^{B_{\lambda}}$ in terms of the Specht matrix $M_{\lambda}$ and $[\sigma]_{X_{\lambda}}^{X_{\lambda}}$ as Theorem~\ref{thm:hypo-rep} below.

\begin{definition}
  A pair of biwords $\pair{x}{y} \in \mathbold{A}^n \mathbin{\square} \mathbold{A}^n$ is called a \emph{standard pair} if $x, y \in (A_{\circ} \sqcup A_{+})^n$ and
  (i) for all $a \in A$, the restrictions of $y$ to $a.x_{(0)}^* \cap [n]$ and to $a.x_{(1)}^*$
  are strictly increasing in radius, and
  (ii) for all $b \in A$, the restrictions of $x$ to $b.y_{(0)}^* \cap [n]$ and to $b.y_{(1)}^*$
  are strictly increasing in radius.
\end{definition}

Note that, if $\pair{x}{y} \in X_{\lambda} \mathbin{\square} X_{\lambda^t}$ is a standard pair then $T = \pair{x}{y}^*$ is a \emph{standard bitableau}, in the sense that the entries in each of the two components are positive, and increasing along each row and column.
We denote by $\SYT{\lambda}$ the set of all standard bitableaus of shape $\lambda$.

Using the order $(1,0) < (1,1) < (1,-1) < (2,0) < (2,1) < (2,-1) < \dotsm$ on $\mathbold{A}$, we assume that both $\mathbold{A} \mathbin{\square} \mathbold{A}$ and the set $(\mathbold{A} \mathbin{\square} \mathbold{A})^n$ of words of length $n$ over this set are ordered lexicographically.  This order induces an order on the
bitableaus of shape $\lambda$, and in particular on $\SYT{\lambda}$.  With respect to this order, the standard pairs $\pair{x}{y} \in (X_{\lambda} \mathbin{\square} X_{\lambda^t})^{\#}$ are characterized by the property
\begin{align}\label{eq:smallest-bitab}
  \pair{x}{y} < \pair{x}{y}.\sigma\text{, for all } \sigma \in \Hypo(x^*)
  \text{ and for all } \sigma \in \Hypo(y^*).
\end{align}
It follows that the projection maps $\pair{x}{y}^* \mapsto x$ and $\pair{x}{y}^* \mapsto y$ from $\SYT{\lambda}$ onto $X_{\lambda}$ and $X_{\lambda^t}$ are both injective.
We denote the images of those projections by $X_{\lambda}^{\heartsuit}$ and $X_{\lambda^t}^{\heartsuit}$, respectively, and set
\begin{align*}
  B_{\lambda} = \{v_y : y \in X_{\lambda^t}^{\heartsuit}\}
  = \{ v_T : T \in \SYT{\lambda} \}\text.
\end{align*}

\begin{lemma}\label{la:hypo-b-indep}
  The set $B_{\lambda} \subseteq S^{\lambda}$ is linearly independent.
\end{lemma}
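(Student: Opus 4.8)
The plan is to mimic the argument for Lemma~\ref{la:syt-indep} and Lemma~\ref{la:mono-b-indep}: exhibit a square submatrix of the Specht matrix $M_{\lambda}$, indexed by the images $X_{\lambda^t}^{\heartsuit}$ and $X_{\lambda}^{\heartsuit}$ of the two projections from $\SYT{\lambda}$, and show that this submatrix $M_{\lambda}^{\heartsuit} = (m_{yx}^{\lambda})$, with $y \in X_{\lambda^t}^{\heartsuit}$ and $x \in X_{\lambda}^{\heartsuit}$, is, up to signs, upper unitriangular, hence invertible. Since the rows of $M_{\lambda}^{\heartsuit}$ are precisely the coefficient vectors $[v_y]_{X_{\lambda}}$ restricted to the columns $X_{\lambda}^{\heartsuit}$, invertibility of $M_{\lambda}^{\heartsuit}$ forces the $v_y$, $y \in X_{\lambda^t}^{\heartsuit}$, to be linearly independent, which is exactly the claim.

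First I would equip $X_{\lambda}^{\heartsuit}$ and $X_{\lambda^t}^{\heartsuit}$ with the orders they inherit, via the injective projections, from the lexicographic order on $\SYT{\lambda}$ described just above the lemma. The diagonal entry of $M_{\lambda}^{\heartsuit}$ indexed by a standard bitableau $T = \pair{x}{y}^*$ is $m_{yx}^{\lambda} = \sgn{\pairx{x}{y}^{\flat}} = \pm 1$, since a standard pair lies in $(X_{\lambda} \mathbin{\square} X_{\lambda^t})^{\#}$; in particular, every diagonal entry is nonzero. For the triangularity claim I would show that if $m_{yx}^{\lambda} \neq 0$ for $y \in X_{\lambda^t}^{\heartsuit}$ and $x \in X_{\lambda}^{\heartsuit}$ arising from \emph{different} standard bitableaus — say $y$ comes from the standard pair $\pair{x'}{y}$ and $x$ from the standard pair $\pair{x}{y'}$ — then the standard pair determined by the column index $x$ precedes (in the $\SYT{\lambda}$ order) the standard pair determined by the row index $y$. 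The mechanism is the characterization \eqref{eq:smallest-bitab}: among all signed rearrangements of $x$ (resp.\ $y$), the standard representative is the lexicographically smallest, and the pair $\pair{x}{y}$ itself, if it lies in the free component, is $\Hypo(x^*)$- and $\Hypo(y^*)$-reducible to its standard form; tracking which of $x$ or $y$ is "already standard" in $\pair{x}{y}$ yields the required inequality between the two bitableaus.

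The main obstacle is making the ordering comparison precise: one must argue that the lexicographic order on the word $\pair{x}{y} \in (\mathbold{A} \mathbin{\square} \mathbold{A})^n$ is compatible, under both projections, with the induced orders on $X_{\lambda}^{\heartsuit}$ and $X_{\lambda^t}^{\heartsuit}$, so that a nonzero off-diagonal entry genuinely corresponds to a strictly-below-diagonal position. This is the biword analogue of the remark preceding \eqref{eq:smallest-multitab} in the monomial case, and the same bookkeeping — now complicated by the presence of the three phase classes $A_{\circ}, A_{+}, A_{-}$, the bar involution, and the requirement that standard biwords avoid $A_{-}$ altogether — carries through, though one should check that restricting attention to biwords in $(A_{\circ} \sqcup A_{+})^n$ does not break the lexicographic comparison. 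Once that is in place, upper unitriangularity up to signs is immediate, and so is invertibility of $M_{\lambda}^{\heartsuit}$, completing the proof.
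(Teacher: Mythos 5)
Your plan is essentially the paper's own proof: by property \eqref{eq:smallest-bitab}, the submatrix $M_{\lambda}^{\heartsuit}$, with rows and columns ordered as inherited from $\SYT{\lambda}$, is up to signs unitriangular and hence invertible, which gives the linear independence of $B_{\lambda}$. One small correction to your triangularity claim: a nonzero entry $m^{\lambda}_{yx}$ forces the standard bitableau attached to the column label $x$ to be greater than or equal to (not to precede) the one attached to the row label $y$ — compare the $(3,2)$ example in the symmetric-group section — but either direction of triangularity, combined with the $\pm 1$ diagonal, yields invertibility, so the argument goes through as in the paper.
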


\begin{proof}
  Due to property~\eqref{eq:smallest-bitab}, the
  $X_{\lambda^t}^{\heartsuit} \times X_{\lambda}^{\heartsuit}$-submatrix
  $M_{\lambda}^{\heartsuit}$ of $M_{\lambda}$ is, up to signs,
  unitriangular and hence invertible.
\end{proof}

As in Section~\ref{sec:mono-mats}, from $\Size{B_{\lambda}} = \Size{\SYT{\lambda}}$, it follows that $B_{\lambda}$ is in fact a basis of $S^{\lambda}$.  This
leads to the main theorem of this section.

\begin{theorem}\label{thm:hypo-rep}
  Let $\lambda \vdash^2 n$.  Then
  \begin{align*}
    [\sigma]^{B_{\lambda}}_{B_{\lambda}} = (M_{\lambda}\, [\sigma]^{X_{\lambda}}_{X_{\lambda}})^{\heartsuit}\, (M_{\lambda}^{\heartsuit})^{-1}\text,
  \end{align*}
  for all $\sigma \in \Hypo_n$.
\end{theorem}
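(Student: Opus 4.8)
The plan is to follow the proof of Theorem~\ref{thm:symm-rep} essentially verbatim, since all the structural ingredients are now in place for $\Hypo_n$. First I would record the key linear-algebra identity: for every $v \in S^{\lambda}$ one has $[v]_{B_{\lambda}}\, M_{\lambda}^{\heartsuit} = [v]_{X_{\lambda}}^{\heartsuit}$, where the superscript $\heartsuit$ on a coefficient vector means restriction to the columns indexed by $X_{\lambda}^{\heartsuit}$. This holds because it holds on each basis vector $v_y \in B_{\lambda}$, $y \in X_{\lambda^t}^{\heartsuit}$: by construction $[v_y]_{X_{\lambda}} = (m^{\lambda}_{yx})_{x \in X_{\lambda}}$ is the $y$-row of $M_{\lambda}$, so its $\heartsuit$-restriction is the $y$-row of $M_{\lambda}^{\heartsuit}$, which is exactly $[v_y]_{B_{\lambda}}\, M_{\lambda}^{\heartsuit}$ since $[v_y]_{B_{\lambda}}$ is the $y$-th standard basis (co)vector. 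Here I use Lemma~\ref{la:hypo-b-indep} together with the counting argument immediately preceding the theorem, which together say that $B_{\lambda}$ is a basis of $S^{\lambda}$ and that $M_{\lambda}^{\heartsuit}$ is invertible.

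Next I would apply this identity to the vector $v_y.\sigma$ for a basis vector $v_y$ and an arbitrary $\sigma \in \Hypo_n$. On the one hand, $[v_y.\sigma]_{B_{\lambda}}\, M_{\lambda}^{\heartsuit} = [v_y.\sigma]_{X_{\lambda}}^{\heartsuit}$; on the other hand, using \eqref{eq:hypo-perm-mat}, $[v_y.\sigma]_{X_{\lambda}} = [v_y]_{X_{\lambda}}\, [\sigma]_{X_{\lambda}}^{X_{\lambda}}$, so its $\heartsuit$-restriction is the $y$-row of the matrix $(M_{\lambda}\, [\sigma]_{X_{\lambda}}^{X_{\lambda}})^{\heartsuit}$. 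Since $S^{\lambda}$ is a $\C\Hypo_n$-submodule of $V^{\lambda}$ (Proposition~\ref{pro:hypo-S-lambda}), $v_y.\sigma$ lies in $S^{\lambda}$ and the left-hand side makes sense. Ranging over all $y \in X_{\lambda^t}^{\heartsuit}$, the rows $[v_y.\sigma]_{B_{\lambda}}$ are precisely the rows of the representing matrix $[\sigma]_{B_{\lambda}}^{B_{\lambda}}$, so we obtain the matrix identity
\begin{align*}
  [\sigma]_{B_{\lambda}}^{B_{\lambda}}\, M_{\lambda}^{\heartsuit} = (M_{\lambda}\, [\sigma]_{X_{\lambda}}^{X_{\lambda}})^{\heartsuit}\text.
\end{align*}
Multiplying on the right by $(M_{\lambda}^{\heartsuit})^{-1}$ gives the claimed formula.

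I do not anticipate a serious obstacle, as the argument is formal once $B_{\lambda}$ is known to be a basis and $M_{\lambda}^{\heartsuit}$ invertible; the only point requiring a moment of care is that the ``$\heartsuit$'' restriction commutes with right multiplication by $[\sigma]_{X_{\lambda}}^{X_{\lambda}}$ in the sense used above, i.e.\ that taking the $\heartsuit$-rows of $M_{\lambda}\,[\sigma]_{X_{\lambda}}^{X_{\lambda}}$ agrees with multiplying the $\heartsuit$-rows of $M_{\lambda}$ — this is immediate since row-restriction is left multiplication by a coordinate-projection matrix, which commutes past right multiplication. If anything, the mildly delicate step is the implicit appeal to the counting argument preceding the statement to guarantee $\dim S^{\lambda} = \Size{B_{\lambda}}$, but that is already granted. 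Accordingly the proof can simply be stated as ``analogous to the proof of Theorem~\ref{thm:symm-rep}'', with the substitutions $\Symm_n \rightsquigarrow \Hypo_n$, permutation $\rightsquigarrow$ signed permutation, tableau $\rightsquigarrow$ bitableau, and \eqref{eq:symm-perm-mat} $\rightsquigarrow$ \eqref{eq:hypo-perm-mat}, together with Remark~\ref{rem:hypo-myx}, Proposition~\ref{pro:hypo-S-lambda}, and Lemma~\ref{la:hypo-b-indep} in place of their type-$A$ counterparts.
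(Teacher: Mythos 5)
Your proposal is correct and follows essentially the same argument as the paper, which proves Theorem~\ref{thm:hypo-rep} by declaring it analogous to Theorem~\ref{thm:symm-rep}: the identity $[v]_{B_{\lambda}}\, M_{\lambda}^{\heartsuit} = [v]_{X_{\lambda}}^{\heartsuit}$ on $S^{\lambda}$, applied to $v_y.\sigma$ via \eqref{eq:hypo-perm-mat}, is exactly the paper's computation, with Lemma~\ref{la:hypo-b-indep}, Proposition~\ref{pro:hypo-S-lambda}, and the counting argument playing the same roles you assign them. Nothing further is needed.
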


\begin{proof}
  Analogous to the proof of Theorem~\ref{thm:symm-rep}.
\end{proof}

\begin{example}
  For $\lambda = (\emptyset, (3,2))$, Theorem~\ref{thm:symm-rep} yields
  \begin{align*}
[t_1]_{B_{\lambda}}^{B_{\lambda}} &=
    {\tiny\arraycolsep3pt
    \left(
    \begin{array}{rrrrr}
   -1 & 0 & 0 & 0 & 0 \\
   0 & -1 & 0 & 0 & 0 \\
   0 & 0 & -1 & 0 & 0 \\
   0 & 0 & 0 & -1 & 0 \\
   0 & 0 & 0 & 0 & -1 \\
    \end{array}
    \right)}\text,&
[t_1\, (1,2,3,4,5)]_{B_{\lambda}}^{B_{\lambda}} &=
    {\tiny\arraycolsep3pt
    \left(
    \begin{array}{rrrrr}
   0 & -1 & 0 & 0 & 0 \\
   0 & 0 & -1 & 0 & 0 \\
   -1 & 0 & 1 & 1 & 1 \\
   0 & 0 & 1 & 0 & 1 \\
   0 & -1 & -1 & -1 & -1 \\
    \end{array}
    \right)}\text.
  \end{align*}
\end{example}


\bibliography{buntspecht}
\bibliographystyle{amsplain}

\end{document}